\newtheorem{theorem}{Theorem}[section]
\newtheorem{lemma}[theorem]{Lemma}
\newtheorem{prop}[theorem]{Proposition}
\newtheorem{defi}[theorem]{Definition\rm}
\newtheorem{rem}[theorem]{Remark\/}
\newcommand{\R}{\mathbb{R}}
\numberwithin{equation}{section}
\newcommand{\dsum}{\displaystyle\sum}
\begin{document}

\title[Singular anisotropic elliptic equations]{Boundedness of solutions to singular anisotropic elliptic equations }

\author{Barbara Brandolini}
\address[Barbara Brandolini]{Dipartimento di Matematica e Informatica, Universit\`a degli Studi di Palermo, via Archirafi 34, 90123 Palermo, Italy}
\email{barbara.brandolini@unipa.it}

\author{Florica C. C\^irstea}
\address[Florica C. C\^irstea]{School of Mathematics and Statistics, The University of Sydney,
	NSW 2006, Australia}
\email{florica.cirstea@sydney.edu.au}

\date{}
\keywords{Anisotropic operator; boundary singularity;  bounded solutions}

\subjclass[2010]{ 35J75, 35J60, 35Q35}

\begin{abstract}
We  prove 
the uniform boundedness of all solutions for a general class of Dirichlet anisotropic elliptic problems 
of the form $$-\Delta_{\overrightarrow{p}}u+\Phi_0(u,\nabla u)=\Psi(u,\nabla u) +f $$ on a bounded open subset $\Omega\subset \mathbb R^N$ $(N\geq 2)$, where $ \Delta_{\overrightarrow{p}}u=\sum_{j=1}^N \partial_j (|\partial_j u|^{p_j-2}\partial_j u)$ and  
$\Phi_0(u,\nabla u)=\left(\mathfrak{a}_0+\sum_{j=1}^N \mathfrak{a}_j |\partial_j u|^{p_j}\right)|u|^{m-2}u$,  
with $\mathfrak{a}_0>0$,
$m,p_j>1$,   $\mathfrak{a}_j\geq 0$ for $1\leq j\leq N$ and $N/p=\sum_{k=1}^N (1/p_k)>1$. We assume that $f \in L^r(\Omega)$ with $r>N/p$. The feature of this study  is the inclusion of a possibly singular gradient-dependent term $\Psi(u,\nabla u)=\sum_{j=1}^N |u|^{\theta_j-2}u\, |\partial_j u|^{q_j}$, where $\theta_j>0$ and $0\leq q_j<p_j$ for $1\leq j\leq N$. The existence of such weak solutions is contained in a recent paper by the authors.
\end{abstract}

\maketitle



\section{Introduction}\label{Sec1}

Singular elliptic and parabolic equations are widely recognised as 
one of the important branches of modern analysis from the viewpoint of the physical significance of the problems at hand and the
novel analytical methods that they generate (see, for example, \cite{Du,V1,V2} and the references therein).
 Singularities can develop or manifest in many ways for elliptic PDEs, such as the blow-up of a solution at an isolated point (see, for instance, \cite{CD}) or at the whole boundary (see, for example, \cite{CD1,ZD}). Singularities can also arise through the coefficients in an equation, usually in the form of singular potentials (see, e.g., \cite{WD}). Here, we tackle singularities that appear in anisotropic elliptic PDEs via the nonlinear lower order term going to infinity when a positive solution tends to 0 on the boundary. 

The current demand to predict with a higher and higher degree of accuracy the behaviour in systems arising from the physical and biological sciences means
that more variables need to be taken into account in the modelling of the natural phenomena, generating 
nonlinear PDEs with nonstandard growth and featuring singularities \cite{AS}. Anisotropic elliptic and parabolic  equations represent a very active research area in the last decades. The increasing interest in nonlinear anisotropic problems is justified by their  
applications in many areas from image recovery and the mathematical modelling of non-Newtonian fluids to biology, where they serve as models for the propagation of epidemic diseases in heterogeneous domains (see, for example, \cite{ADS}).  
  
Important tools available for the isotropic case cannot be extended to the anisotropic setting (such as the strong maximum principle, see \cite{V}).
Despite
important strides on existence, uniqueness, and regularity of weak solutions of anisotropic equations (see, for instance, \cites{ACCZ,AC,CV,dBFZ,DiCastro,FVV,MT}), the influence of anisotropy in many problems remains as yet elusive in many essential aspects. This note is a continuation of our study initiated in \cite{baci,BC} to prove existence and boundedness of solutions for general singular anisotropic elliptic equations.   

Throughout this paper, we assume that $\Omega$ is a bounded, open subset of $\mathbb R^N$ ($N\geq 2$). We impose no smoothness assumption on the boundary of $\Omega$. 
We always assume that 
\begin{equation} \label{IntroEq0}
p_j\in (1,\infty)\ \mbox{for every }1\leq j\leq N \quad  \text{and}\quad 
p<N,
\end{equation}
where $p:=N/\sum_{j=1}^N (1/p_j)$ is the harmonic mean of $p_1,\ldots, p_N$.
For any $r>1$, let $r'=r/(r-1)$ be the conjugate exponent of $r$. 
We set $\overrightarrow{p}=\left(p_1,p_2,\ldots,p_N\right)$ and 
$\overrightarrow{p}'=(p_1',p_2',\ldots,p_N')$. We define by $W_0^{1,\overrightarrow{p}}(\Omega)$       
the closure of   
$C_c^\infty (\Omega)$, the set of smooth functions with compact supports in $\Omega$,  
with respect to the norm 
$$ \|u\|_{W_0^{1,\overrightarrow{p}}(\Omega)}=\sum_{j=1}^N \|\partial_j u\|_{L^{p_j}(\Omega)}.$$ As usual, $\nabla u=(\partial_1u,\ldots,\partial_N u)$ is the gradient of $u$.  
The assumption $p<N$ gives that the embedding $W_0^{1,\overrightarrow{p}}(\Omega)\hookrightarrow L^s(\Omega)$ is continuous 
for every
$s\in [1,p^\ast]$ and compact for every $s\in [1,p^\ast)$, 
where $p^\ast:=Np/(N-p)$ stands for the anisotropic Sobolev exponent.   
The anisotropic $\overrightarrow{p}$-Laplacian operator given by  
\begin{equation} \label{jdmsn} 
-\Delta_{\overrightarrow{p}} u=-\sum_{j=1}^N  \partial_j (|\partial_j u|^{p_j-2}\partial_j u)
\end{equation} 
is a bounded, coercive and pseudo-monotone operator of Leray--Lions type from 
$W_0^{1,\overrightarrow{p}}(\Omega)$ into its dual  $W^{-1,\overrightarrow{p}'}(\Omega)$. 
For every $(t,\xi)\in \mathbb R\times \mathbb R^N$, we define 
\begin{equation} \label{phiz} 
 \Phi_0(t,\xi)=\left(\mathfrak a_0+ \dsum_{j=1}^N \mathfrak{a}_j |\xi_j|^{p_j}\right)|t|^{m-2} t, 
 \end{equation}
where  
$m>1$, $ \mathfrak a_0>0$, $ \mathfrak a_j\geq  0$ for $ 1\leq j\leq N$, whereas
\begin{equation} \label{ass}
 \Psi(t,\xi)=\dsum_{j=1}^N  |t|^{\theta_j-2}t\, |\xi_j|^{q_j} \end{equation}
with $\theta_j>0$ and 
$0\leq q_j<p_j$ for all $1\leq j\leq N$.   

Let $f\in L^1(\Omega)$. In our recent paper \cite{BC}, we give sufficient conditions for the existence of solutions for general singular anisotropic elliptic equations, 
including for our model problem
\begin{equation} \label{eq1}
\left\{ \begin{aligned}
& -\Delta_{\overrightarrow{p}} u+ \Phi_0(u,\nabla u)=\Psi(u,\nabla u)+f \quad \mbox{in } \Omega,\\ 
&  u\in W_0^{1,\overrightarrow{p}}(\Omega), \quad \Phi_0(u,\nabla u) \in L^1(\Omega).
\end{aligned} \right.
\end{equation}

\noindent We distinguish the following cases according to the values of the $\theta_j$'s.

\vspace{0.2cm}
{\bf Case 1:} ({\em Non-singular}) For every $1\leq j\leq N$, we have $\theta_j>1$. 

\vspace{0.2cm}
{\bf Case 2:} ({\em Mildly singular}) There exists $1\leq j\leq N$ such that $\theta_j\leq 1$. 
In this situation, we will consider non-negative solutions of \eqref{eq1} and assume, in addition, that $f \ge 0$ a.e. in $\Omega$.

\vspace{0.2cm}
As in \cite{BC}, by a solution of \eqref{eq1}, we understand the following.

\begin{defi} \label{defi1} 	
A function $u\in W_0^{1,\overrightarrow{p}}(\Omega)$, which is non-negative in Case~2, is said to be a solution of \eqref{eq1} if 
$\Phi_0(u,\nabla u) \in L^1(\Omega)$ and for every $v\in W_0^{1,\overrightarrow{p}}(\Omega)\cap L^\infty(\Omega)$, we have   
\begin{equation} \label{bbb1}
\displaystyle \displaystyle I_u(v):=\int_{\{|u|>0\}} \Psi(u,\nabla u)\,v\,dx\quad \mbox{is finite} 
\end{equation}
and
\begin{equation}\label{ff1}
\sum_{j=1}^N\int_\Omega |\partial_j u|^{p_j-2}\partial_j u\, \partial_j v\, dx+ 
  \int_\Omega \Phi_0(u,\nabla u) \,v\,dx = I_u(v) +\int_\Omega f\, v\, dx.
\end{equation} 
\end{defi}

Under suitable conditions on $m$, we proved in \cite{BC} that \eqref{eq1} has a solution in the above sense. Before recalling the relevant result, we 
introduce the following sets: 
\begin{equation} \label{np}
\begin{aligned}
& N_{\overrightarrow{\mathfrak{a}}}:=\left\{1\leq j\leq N:\  
\mathfrak{a}_j q_j=0,\ \  \frac{\theta_j p_j}{p_j-q_j} \geq p
\right\}, \\
& P_{\overrightarrow{\mathfrak{a}}}:=\left\{1\leq j\leq N :\ 
\mathfrak{a}_j q_j>0,\ 
\mathfrak{m}_j>1\right\},\quad \mbox{where } \mathfrak{m}_j:= \frac{p_j-q_j}{q_j} \left( \frac{ \theta_j p_j}{p_j-q_j} - p \right).
\end{aligned}
\end{equation}

 Suppose that $m$ satisfies
  \begin{equation}\label{m}
	m> \max_{j\in N_{\overrightarrow{\mathfrak{a}}}}  \frac{\theta_j p_j}{p_j-q_j}\quad  
	\text{and} \quad  m>\min\left\{\theta_j , \mathfrak{m}_j\right\} \ \  \text{for every } j\in P_{\overrightarrow{\mathfrak{a}}}.
	\end{equation} 

As a consequence of the main results in \cite{BC}, we obtain the following existence result. 

 \begin{theorem} \label{mainth}  Let \eqref{IntroEq0}--\eqref{ass} and \eqref{m} hold. 
 Assume Case 1 or Case 2.
 If $f\in L^{(p^\ast)'}(\Omega)$, then \eqref{eq1} has a solution $u\in W_0^{1,\overrightarrow{p}}(\Omega) $, which satisfies that 
 \begin{equation} \label{satis}
 \Phi_0(u,\nabla u)\,u,\ \Psi(u,\nabla u)\,u\in L^1(\Omega)\quad \mbox{and}\quad \eqref{ff1}  \ \mbox{holds for } v=u.
 \end{equation}
  \end{theorem}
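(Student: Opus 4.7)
The plan is to deduce Theorem \ref{mainth} from the general existence results of \cite{BC} and then to upgrade the resulting weak solution so that the additional integrability statements in \eqref{satis} hold. The model nonlinearities \eqref{phiz}--\eqref{ass} fit the structural hypotheses of the existence theorems of \cite{BC}, and the constraint \eqref{m} on $m$ is precisely the sharp lower bound required there to ensure that the absorption provided by $\Phi_0$, through its coercive contributions $\mathfrak{a}_0|u|^m$ and $\sum_j \mathfrak{a}_j|u|^{m-2}u\,|\partial_j u|^{p_j}$, dominates the (possibly singular) gradient term $\Psi$ via Young's inequality. Since $f\in L^{(p^\ast)'}(\Omega)\subset L^1(\Omega)$, \cite{BC} therefore furnishes a solution $u\in W_0^{1,\overrightarrow{p}}(\Omega)$ in the sense of Definition \ref{defi1}.

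To derive \eqref{satis}, I would test \eqref{ff1} with $v=T_k(u)\in W_0^{1,\overrightarrow{p}}(\Omega)\cap L^\infty(\Omega)$, where $T_k$ is the usual truncation at level $k>0$. This yields
\[
\sum_{j=1}^N \int_\Omega |\partial_j T_k(u)|^{p_j}\,dx + \int_\Omega \Phi_0(u,\nabla u)\,T_k(u)\,dx = I_u(T_k(u)) + \int_\Omega f\,T_k(u)\,dx.
\]
Both summands on the left are non-negative: the first manifestly, and $\Phi_0(u,\nabla u)\,T_k(u)$ because it inherits the sign of $u\cdot T_k(u)\geq 0$. The data term is controlled by the Sobolev embedding $W_0^{1,\overrightarrow{p}}(\Omega)\hookrightarrow L^{p^\ast}(\Omega)$, giving $\int_\Omega |f\,T_k(u)|\,dx\leq \|f\|_{L^{(p^\ast)'}(\Omega)}\|u\|_{L^{p^\ast}(\Omega)}$, a bound independent of $k$. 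If $I_u(T_k(u))$ can be absorbed uniformly in $k$, then Fatou and monotone convergence as $k\to\infty$ will deliver $\Phi_0(u,\nabla u)\,u,\,\Psi(u,\nabla u)\,u\in L^1(\Omega)$ together with \eqref{ff1} for $v=u$.

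The crux, and the main obstacle, is the uniform estimate on $I_u(T_k(u))$. Since $|u|^{\theta_j-2}u\,T_k(u)\leq |u|^{\theta_j}$ pointwise, it suffices to bound $\sum_{j=1}^N\int_\Omega |u|^{\theta_j}|\partial_j u|^{q_j}\,dx$. The strategy is a case-by-case Young inequality argument. For $j\in N_{\overrightarrow{\mathfrak{a}}}$ (so $\mathfrak{a}_j q_j=0$), one splits with conjugate exponents $(p_j/q_j,\,p_j/(p_j-q_j))$: the gradient piece is absorbed by $\int_\Omega|\partial_j u|^{p_j}\,dx$ and the residual power $|u|^{\theta_j p_j/(p_j-q_j)}$ is controlled by $|u|^m$ using $m\geq \theta_j p_j/(p_j-q_j)$ from \eqref{m} together with $|\Omega|<\infty$. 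For $j\in P_{\overrightarrow{\mathfrak{a}}}$, one instead absorbs the gradient factor into the stronger weighted term $\mathfrak{a}_j|u|^{m-2}u\,|\partial_j u|^{p_j}$ coming from $\Phi_0$, and the condition $m>\min\{\theta_j,\mathfrak{m}_j\}$ ensures that the residual power of $|u|$ is controlled either by $|u|^m\in L^1(\Omega)$ or by the Sobolev embedding into $L^{p^\ast}(\Omega)$. Case 2 is the delicate one: when some $\theta_j\leq 1$, the term $\Psi$ genuinely blows up as $u\to 0^+$, and the sign conditions $u\geq 0,\ f\geq 0$ are essential in order to pass to the limit $k\to\infty$ without cancellation in the singular region $\{u=0\}$.
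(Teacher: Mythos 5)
The paper offers no proof of Theorem~\ref{mainth}: it is stated verbatim as ``a consequence of the main results in \cite{BC}'' and no further argument is given. You correctly recognise that the result rests on \cite{BC}, but your proposed derivation of \eqref{satis} via truncation is extra work that the paper simply attributes to \cite{BC}, and as written it has real gaps.

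First, the reduction ``$f\in L^{(p^\ast)'}(\Omega)\subset L^1(\Omega)$, so \cite{BC} furnishes a solution'' is misapplied. As the paper itself stresses immediately after the Theorem, the $L^1$-data existence result of \cite{BC} carries the \emph{additional} hypothesis $\min_{1\le j\le N}\mathfrak a_j>0$, which is not assumed in Theorem~\ref{mainth}. You must invoke the $L^{(p^\ast)'}$-data theorem of \cite{BC} directly; the downgrading of the data to $L^1$ loses exactly the structural assumption you are not allowed to make.

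Second, the truncation argument intended to deliver \eqref{satis} breaks down for indices $j$ with $\mathfrak a_j=0$ and $q_j>0$, a configuration explicitly permitted by $N_{\overrightarrow{\mathfrak a}}$ (and equivalently by \eqref{cen4}). Testing with $T_k(u)$ gives on the left only $\int_\Omega |\partial_j T_k(u)|^{p_j}\,dx$, which vanishes on $\{|u|>k\}$, and the companion coercive term $\mathfrak a_j\int_\Omega |u|^{m-1}|\partial_j u|^{p_j}|T_k(u)|\,dx$ disappears because $\mathfrak a_j=0$. There is consequently no quantity on the left that can absorb $k\int_{\{|u|>k\}} |u|^{\theta_j-1}|\partial_j u|^{q_j}\,dx$ from $I_u(T_k(u))$, and your Young-inequality absorption fails precisely where it is needed to control the limit $k\to\infty$. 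Likewise, the step ``$|u|^{\theta_j p_j/(p_j-q_j)}$ is controlled by $|u|^m$'' cannot be taken for granted: the definition of a solution only guarantees $\Phi_0(u,\nabla u)\in L^1(\Omega)$, hence $|u|^{m-1}\in L^1(\Omega)$, whereas $|u|^m\in L^1(\Omega)$ is part of the very assertion \eqref{satis} you are trying to prove; within the truncation scheme the absorption must be carried into $\int_\Omega|u|^{m-1}|T_k(u)|\,dx$ uniformly in $k$, which you have not checked. Finally, the index sets $N_{\overrightarrow{\mathfrak a}}$ and $P_{\overrightarrow{\mathfrak a}}$ do not a priori cover all $1\le j\le N$, so your case split is incomplete.
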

We point out that, if $f\in L^1(\Omega)$ and 
 $\min_{1\leq j\leq N} \mathfrak{a}_j>0$, then in \cite{BC} we also prove that \eqref{eq1} has a solution $u\in W_0^{1,\overrightarrow{p}}(\Omega) $. 
 
\subsection{Main result} 
In this note, in the framework of Theorem \ref{mainth}, we prove that, if $f\in L^r(\Omega)$ with $r>N/p$, 
then every solution of \eqref{eq1} satisfying \eqref{satis} is bounded. Moreover, every bounded solution satisfies a uniform bound. More precisely, we prove the following.

\begin{theorem} \label{pre} 
Let \eqref{IntroEq0}--\eqref{ass} and \eqref{m} hold. 
 Assume Case 1 or Case 2. If $f\in L^{r}(\Omega)$ with $r>N/p$,
 then for every solution 
 $u\in W_0^{1,\overrightarrow{p}}(\Omega) $ of \eqref{eq1} satisfying \eqref{satis}, we have $u\in L^\infty(\Omega)$.  Moreover, there exists a positive constant $C$, depending only on $\Omega, N, m,\overrightarrow{p}, \overrightarrow{q},r, \|f\|_{L^r(\Omega)}, \{ \mathfrak{a}_j\}_{0\leq j\leq N}$ and $\{\theta_j\}_{1\leq j\leq N}$, such that
\begin{equation}\label{uinfty}  
 \|u\|_{L^\infty(\Omega)}\le C.
\end{equation}
\end{theorem}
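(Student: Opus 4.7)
The plan is to use Stampacchia's truncation method, suitably adapted to the anisotropic framework. For $k \geq 0$, write $A_k := \{x \in \Omega : |u(x)| > k\}$ and let $G_k(u) := (|u|-k)_+\,\mathrm{sgn}(u)$, which belongs to $W_0^{1,\overrightarrow{p}}(\Omega)$. Although $G_k(u)$ is not a priori in $L^\infty(\Omega)$, the inequality $|G_k(u)| \leq |u|$ combined with \eqref{satis} and $f \in L^r(\Omega) \subset L^{(p^\ast)'}(\Omega)$ allows us to insert the bounded truncations $T_n(G_k(u))$ into \eqref{ff1} and pass to the limit by dominated convergence, thereby justifying the choice $v = G_k(u)$ in \eqref{ff1}. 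This produces the identity
\begin{equation*}
\sum_{j=1}^N \int_{\Omega} |\partial_j G_k(u)|^{p_j}\,dx + \int_{\Omega} \Phi_0(u,\nabla u)\, G_k(u)\,dx = \sum_{j=1}^N \int_{A_k} |u|^{\theta_j - 1}|\partial_j u|^{q_j}(|u|-k)\,dx + \int_{A_k} f\,G_k(u)\,dx.
\end{equation*}

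The $\Phi_0$ term is non-negative and dominates $\mathfrak{a}_0 \int_{A_k} |u|^{m-1}(|u|-k)\,dx + \sum_j \mathfrak{a}_j \int_{A_k} |\partial_j u|^{p_j}|u|^{m-1}(|u|-k)\,dx$; together with the leading $\sum_j \int_\Omega |\partial_j G_k(u)|^{p_j}\,dx$, these provide the two coercivity reservoirs that absorb the singular $\Psi$ term. For every index $j$, I would estimate $|u|^{\theta_j-1}|\partial_j u|^{q_j}(|u|-k)$ by Young's inequality with conjugate exponents $p_j/q_j$ and $p_j/(p_j-q_j)$, distributing the factor $(|u|-k)$ in two different ways depending on $j$: if $j \in P_{\overrightarrow{\mathfrak{a}}}$, arrange the gradient piece to match $|\partial_j u|^{p_j}|u|^{m-1}(|u|-k)$ (absorbable into the $\mathfrak{a}_j$-contribution of $\Phi_0$); if $j \in N_{\overrightarrow{\mathfrak{a}}}$, arrange it instead to match $|\partial_j u|^{p_j}$ (absorbable into the $-\Delta_{\overrightarrow{p}}$ term). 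In both cases, the conditions in \eqref{m} are exactly what is needed to make the leftover pure power of $|u|$ either pointwise bounded on $A_k$ by $C|u|^{m-1}(|u|-k)$ (absorbable into the $\mathfrak{a}_0$ part of $\Phi_0$), or, after H\"older's inequality, controlled by $|A_k|^{\alpha}$ via the a priori integrability $u \in L^{p^\ast}(\Omega)$, with a coefficient that tends to zero as $k \to \infty$.

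After these absorptions, the inequality reduces to $\sum_j \int_\Omega |\partial_j G_k(u)|^{p_j}\,dx \leq 2\int_{A_k} |f|(|u|-k)\,dx + \omega(k)$, where $\omega(k) \to 0$ as $k \to \infty$. H\"older's inequality yields $\int_{A_k}|f|(|u|-k)\,dx \leq \|f\|_{L^r(\Omega)} |A_k|^{1 - 1/r - 1/p^\ast} \|G_k(u)\|_{L^{p^\ast}(\Omega)}$; applying the anisotropic Sobolev inequality $\|G_k(u)\|_{L^{p^\ast}}^N \leq C \prod_j \|\partial_j G_k(u)\|_{L^{p_j}}$ together with the weighted AM--GM inequality applied to $\{\|\partial_j G_k(u)\|_{L^{p_j}}^{p_j}\}_{j=1}^N$ gives $\|G_k(u)\|_{L^{p^\ast}}^p \leq C \sum_j \|\partial_j G_k(u)\|_{L^{p_j}}^{p_j}$. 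Combining these estimates produces $\|G_k(u)\|_{L^{p^\ast}(\Omega)}^{p-1} \leq C|A_k|^{1 - 1/r - 1/p^\ast}$ for all sufficiently large $k$. The hypothesis $r > N/p$ is equivalent to $(1 - 1/r - 1/p^\ast)\,p^\ast/(p-1) > 1$, so the standard Stampacchia lemma applied to the decreasing function $k \mapsto |A_k|$ yields $|A_{k_0}| = 0$ for some finite $k_0$ depending only on the data specified in the theorem, which gives \eqref{uinfty}.

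The principal obstacle is the Young-type balancing of the singular gradient term $\Psi$: each summand must be split so that its gradient factor is absorbed by the appropriate coercivity source ($-\Delta_{\overrightarrow{p}}$ or $\Phi_0$), while simultaneously ensuring the leftover pure-power-in-$u$ remainder enjoys the decay $\omega(k) \to 0$. This balancing is precisely what the conditions in \eqref{m} that distinguish $P_{\overrightarrow{\mathfrak{a}}}$ from $N_{\overrightarrow{\mathfrak{a}}}$ are designed to permit. Case~2 (some $\theta_j \leq 1$) demands extra care since $|u|^{\theta_j-1}$ is unbounded near $\{u=0\}$; however, the restriction of all relevant integrals to $A_k$ together with the non-negativity of $u$ in that case renders the issue harmless provided $k \geq 1$.
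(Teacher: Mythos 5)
Your overall framework matches the paper's strategy: take $v=G_k(u)$ in the weak formulation (correctly justified via \eqref{satis}), split the singular term $\Psi$ by Young's inequality so that the gradient factor is absorbed by either the $\mathfrak{a}_j$-part of $\Phi_0$ (when $j\in P_{\overrightarrow{\mathfrak{a}}}$) or by the principal part $-\Delta_{\overrightarrow{p}}$ (when $j\in N_{\overrightarrow{\mathfrak{a}}}$), then invoke the anisotropic Sobolev inequality and Stampacchia's lemma. This is indeed how the paper proceeds, and your estimate of the $f$-term via H\"older against $\|G_k(u)\|_{L^{p^\ast}}$ and $|A_k|^{1-1/r-1/p^\ast}$ is in fact clean and correct for that term.

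However, there is a genuine gap in your treatment of the leftover pure-power-in-$|u|$ remainders. You claim that they are either pointwise absorbable into the $\mathfrak{a}_0$ reservoir or ``controlled by $|A_k|^\alpha$ via the a priori integrability $u\in L^{p^\ast}(\Omega)$, with a coefficient that tends to zero as $k\to\infty$,'' arriving at the one-shot bound $\|G_k(u)\|_{L^{p^\ast}}^{p-1}\le C|A_k|^{1-1/r-1/p^\ast}$.
That is false in the full range permitted by \eqref{m}, equivalently \eqref{cen4}. Concretely, when $m<p$ the hypotheses allow an index $j$ with $q_j=0$ and $m\le\theta_j<p$: the term $\int_{A_k}|u|^{\theta_j-1}|G_k(u)|\,dx$ cannot be dominated by $\varepsilon\int_{A_k}|u|^{m-1}|G_k(u)|\,dx$ for large $k$, because $k^{\theta_j-m}$ is nondecreasing, and after H\"older (against $\|G_k(u)\|_{L^{p^\ast}}$, $\|u\|_{L^{p^\ast}}$, $|A_k|^{\cdot}$) followed by Young and Sobolev, the coefficient is $\|u\|_{L^{p^\ast}(\Omega)}^{(\theta_j-1)p'}$ --- a fixed constant, not a vanishing quantity --- while the power of $|A_k|$ entering the Stampacchia inequality is $(p^\ast-\theta_j)/(p-1)$. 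This is strictly less than $1$ whenever $\theta_j>p^\ast-p+1$, which is compatible with $\theta_j<p$ as soon as $N(p-1)>p(2p-1)$ (e.g.\ $p=2$, $N\ge 7$). So the direct application of Lemma~\ref{st}~$i)$ is not available, and your claimed final inequality does not hold.

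The paper closes exactly this gap with a bootstrap: assuming only $u\in L^\gamma(\Omega)$ with $\gamma=p^\ast$, one obtains a Stampacchia inequality with an exponent $\widehat\beta(\gamma)$ (or $\widetilde\beta(\gamma)$, $\overline\beta(\gamma)$ in the later propositions) that may be $\le 1$; then Lemma~\ref{st}~$iii)$ is used to upgrade $u\in L^{\gamma_{i+1}}(\Omega)$ from $u\in L^{\gamma_i}(\Omega)$, and a fixed-point argument on the limiting value of $\widehat\beta(\gamma_i)$ --- which uses $r>N/p$ and the fact that the limiting exponent must exceed $1$ --- shows the iteration terminates in finitely many steps at some $\gamma_n$ with exponent $>1$, after which Lemma~\ref{st}~$i)$ applies. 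This iteration, threaded through Propositions~\ref{prop1}, \ref{prop2}, \ref{prop3} and Section~4 of the paper, is essential and entirely absent from your proposal.
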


We remark that our assumption \eqref{m} is equivalent to requiring that,
for every $1 \le j\le N$, we have one of the following: 
\begin{equation} \label{cen4}
\begin{aligned}
& \mathfrak{a}_j>0, \> q_j \ge 0, \> 0<\theta_j<\max\{m,p\}\frac{p_j-q_j}{p_j}+\frac{m q_j}{p_j}, 
\\
& \mathfrak{a}_j=0, \> q_j \ge 0,\> 0<\theta_j<\max\{m,p\}\frac{p_j-q_j}{p_j}.
\end{aligned}
\end{equation}

\subsection{Strategy of the proof}

The crucial ingredient in the proof of our Theorem \ref{pre} is the celebrated Stampacchia's method (see Lemma 4.1 in \cite{S}, which is recalled in Section 2).  
Even if we know that Stampacchia's procedure is nowadays a typical technique to prove that a solution of an elliptic equation is bounded, its application in this context is far from being trivial. For the reader's convenience, we provide  all the details, and we split the proof into several propositions of increasing generality until we reach the full range in \eqref{cen4}.  

\subsection{Further comments} 

We conclude our introduction by comparing our results with the ones known in the isotropic case, for which the model problem is
\begin{equation}\label{iso}
\left\{ \begin{array}{ll} 
-\Delta_{p} u+ \lambda |u|^{m-2}u=g(u) |\nabla u|^q +f \quad& \mbox{in } \Omega,\\  \\
u=0 & \mbox{on } \partial\Omega.
\end{array} \right.
\end{equation}
Assume that $\lambda=0$. The exhaustive analysis of the necessary growth condition on $g$ to obtain a solution for every datum $f\in L^r(\Omega)$, where $r>1$, was made by Porretta and Segura de Le\'on in \cite{PS}. When $f \in L^{\frac N p}(\Omega)$ and $\lim_{|s| \to +\infty}g(s)=0$, they prove the existence of a bounded weak solution $u \in W_0^{1,p}(\Omega) \cap L^\infty (\Omega)$. Under the same assumptions, in \cite{LMS} the authors investigate the case $p=2$, $1<q<2$ and prove an analogous existence result of a bounded solution of \eqref{iso}.

When $g(u) $ is singular in the $u$-variable at $u=0$, that is $g(u)=|u|^{-\nu}$ with $\nu>0$, and we take $q=p$, the existence of a bounded weak solution is proved in \cite{GM,GPS} when $f\in L^r(\Omega)$ with $r>N/ p$. 
 In \cite{GM} the presence of an absorption term, with exponent $m=2$, is used to prove the existence of a bounded solution in $H^1_{\rm{loc}}(\Omega)$ when $p=q=2$ and $f$ is a bounded, non-negative function.  In \cite{GPS1} the authors prove existence results when $p=q=2$, $\nu\in (0,1)$ and $f\in L^r(\Omega)$ with $r\geq N/2$,  allowing the presence of a sign-changing datum $f$.  They also discuss related questions such as the existence of solutions when the datum $f $ is less
regular, or the boundedness of the solutions when $r>N/2$.


\section{ Notation and auxiliary results}

For $k>0$, we let $T_k:\mathbb R\to \mathbb R$ stand for the truncation at height $k$, that is,  
\begin{equation} \label{trunc} 
T_k(s)=s \quad \mbox{if } |s| \le k, \quad T_k(s)=k\, \frac{s}{|s|} \quad \mbox{if }  |s|>k.
\end{equation}
\noindent Moreover, we define $G_k:\mathbb R\to \mathbb R$ by  
\begin{equation}\label{gk}
G_k(s)=s-T_k(s)\quad \mbox{for every }s\in \R,
\end{equation}
 so that $G_k=0$ on $[-k,k]$.  
 
 In order to prove the boundedness of a solution to our problem, we will make use of the following result, contained in the celebrated paper \cite{S} by G. Stampacchia.
 
 \begin{lemma}[See Lemma~4.1 in \cite{S}] \label{st}
 Let $k_0\in \R$ and $\varphi: [k_0,+\infty) \to [0,+\infty)$ be a non-increasing function. Assume that, for some positive constants $a$, $b$ and $C$, we have
 $$
 \varphi(h)\le \frac{C}{(h-k)^a}[\varphi(k)]^b\quad \mbox{for all } h>k\ge k_0.
 $$
Then:
 \begin{enumerate}[$i)$]
 \item if $b>1$, it holds
 $$
 \varphi(k_0+d)=0
 $$
 where
 $$
 d=C^{\frac 1 a}[\varphi(k_0)]^{\frac{b-1}{a}}2^{\frac{b}{b-1}};
 $$
 
 \item if $b=1$, it holds
 $$
 \varphi(h)\le e\, \exp\left(-\zeta(h-k_0)\right)\varphi(k_0)
 $$
 where
 $$\zeta=\left(eC\right)^{-\frac 1 a};
 $$
 
 \item if $b<1$ and $k_0>0$, it holds
 $$
 \varphi(h) \le 2^{\frac{s}{1-b}}\left(C^{\frac{1}{1-b}}+(2k_0)^{s} \varphi(k_0)\right)h^{-s}
 $$ 
 where
 $$
 s=\frac{a}{1-b}.
 $$
 \end{enumerate}
 \end{lemma}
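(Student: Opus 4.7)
My plan is to handle the three cases by the classical Stampacchia iteration scheme: in each one I select a nondecreasing sequence $\{k_n\}_{n\ge 0}$ starting at $k_0$, apply the hypothesis iteratively with $h=k_{n+1}$, $k=k_n$ to obtain a decay estimate on $\varphi(k_n)$, and then transfer the estimate to general $h$ by monotonicity of $\varphi$. The choice of sequence is dictated by the exponent $b$: a geometric sequence converging to a finite limit when $b>1$, an arithmetic sequence when $b=1$, and a dyadic sequence tending to $+\infty$ when $b<1$.

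\textbf{Case $b>1$.} Set $k_n = k_0+d(1-2^{-n})$, so $k_n \nearrow k_0+d$ and $k_{n+1}-k_n = d\,2^{-(n+1)}$. The hypothesis yields
\[
\varphi(k_{n+1}) \le \frac{C\,2^{a(n+1)}}{d^{a}}\,[\varphi(k_n)]^{b}.
\]
I prove by induction that $\varphi(k_n)\le\varphi(k_0)\,A^{-n}$ with $A=2^{a/(b-1)}$. Collecting powers of $2$ in the inductive step via the identity $a(n+1)+\tfrac{a(n+1-nb)}{b-1}=\tfrac{ab}{b-1}$ reduces the step to the single requirement $C\,2^{ab/(b-1)}\,[\varphi(k_0)]^{b-1}\le d^{a}$, which is exactly the defining relation of the $d$ given in the statement. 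Since $A>1$, $\varphi(k_n)\to 0$, so monotonicity forces $\varphi(k_0+d)\le\lim_n \varphi(k_n)=0$.

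\textbf{Case $b=1$.} Use the arithmetic sequence $k_n=k_0+n\delta$ with $\delta=(eC)^{1/a}$, so that $C/\delta^a=1/e$ and the hypothesis collapses to $\varphi(k_{n+1})\le e^{-1}\varphi(k_n)$, giving $\varphi(k_n)\le e^{-n}\varphi(k_0)$. For arbitrary $h\ge k_0$, choose the unique $n\ge 0$ with $k_n\le h<k_{n+1}$; then $n+1>(h-k_0)/\delta$, and monotonicity of $\varphi$ yields
\[
\varphi(h) \le \varphi(k_n) \le e\,\exp\!\bigl(-\zeta(h-k_0)\bigr)\,\varphi(k_0), \qquad \zeta=1/\delta=(eC)^{-1/a}.
\]

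\textbf{Case $b<1$, $k_0>0$.} Use the dyadic sequence $k_n=2^n k_0$, where $k_0>0$ is essential to get $k_n\to\infty$. The hypothesis becomes
\[
\varphi(k_{n+1})\le \frac{C}{(2^n k_0)^{a}}\,[\varphi(k_n)]^{b},
\]
and I induct on $\varphi(k_n)\le M\,2^{-ns}$ with $s=a/(1-b)$. The algebraic identity $a+bs-s=0$ (equivalent to the definition of $s$) closes the induction provided $M\ge\max\{\varphi(k_0),\,C^{1/(1-b)}2^{s/(1-b)}k_0^{-s}\}$. Given any $h\ge k_0$, pick $n$ with $k_n\le h<k_{n+1}=2k_n$; combining monotonicity with $k_n>h/2$ gives $\varphi(h)\le M(2k_0)^{s}h^{-s}$, and the bound $\max\{\alpha,\beta\}\le\alpha+\beta$ produces an estimate of the stated form. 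The main obstacle I anticipate is precisely this last step of constant bookkeeping: matching the prefactor $2^{s/(1-b)}$ exactly (rather than some slightly larger power of $2$) requires either an optimized choice of the geometric ratio in the iteration or a careful reorganization of the two terms in $M$ before applying $\max\le\mathrm{sum}$. This is a calculation of careful algebra rather than a conceptual difficulty, and once executed the non-increasing character of $\varphi$ transfers the estimate from the dyadic grid to every $h\ge k_0$.
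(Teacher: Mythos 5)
The paper does not prove this lemma; it is quoted verbatim from Stampacchia, so there is no in-text argument to compare against. Your proposal is the standard iteration proof, and parts $i)$ and $ii)$ are complete and land exactly on the stated constants: the identity $a(n+1)+\tfrac{a(n+1-nb)}{b-1}=\tfrac{ab}{b-1}$ does reduce the inductive step to $C\,2^{ab/(b-1)}[\varphi(k_0)]^{b-1}\le d^a$, which is precisely the definition of $d$, and the arithmetic-sequence argument for $b=1$ is exact. The only loose end is the one you flag in $iii)$: with $M=\max\{\varphi(k_0),\,C^{1/(1-b)}2^{s/(1-b)}k_0^{-s}\}$ your final bound is $\varphi(h)\le\max\{(2k_0)^s\varphi(k_0),\,C^{1/(1-b)}2^{s/(1-b)+s}\}\,h^{-s}$, and the second entry exceeds the corresponding term $2^{s/(1-b)}C^{1/(1-b)}$ of the statement by the factor $2^{s}$, which cannot be absorbed when $\varphi(k_0)$ is small; so as written you prove the estimate only with a slightly larger constant (harmless for every application in this paper, but not literally the quoted inequality). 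The clean way to close it is the reorganization you anticipate: for $h\ge 2k_0$ apply the hypothesis with $k=h/2$ and set $g(h)=h^s\varphi(h)$, so that $s(1-b)=a$ gives $g(h)\le C2^{s}\,[g(h/2)]^{b}$; with $M_n:=\sup\{g(h):2^nk_0\le h\le 2^{n+1}k_0\}$ one gets $M_n\le C2^sM_{n-1}^{b}$ and $M_0\le (2k_0)^s\varphi(k_0)$, whence by induction $M_n\le\max\{(2k_0)^s\varphi(k_0),(C2^s)^{1/(1-b)}\}$ for all $n$. Since $(C2^s)^{1/(1-b)}=C^{1/(1-b)}2^{s/(1-b)}$, bounding the maximum by the sum and using $2^{s/(1-b)}\ge 1$ yields exactly $\varphi(h)\le 2^{s/(1-b)}\bigl(C^{1/(1-b)}+(2k_0)^s\varphi(k_0)\bigr)h^{-s}$ for all $h\ge k_0$ (the range $k_0\le h\le 2k_0$ being covered by $g(h)\le M_0$ directly). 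With that substitution your proof is complete.
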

 
 We next recall the anisotropic Sobolev inequality in \cite{T}*{Theorem~1.2}.

\begin{lemma} \label{sob} Let $N\geq 2$ be an integer. If $1 < p_j <\infty$ for every $1\le j \le N$ and $p<N$,  
then there exists a
positive constant $\mathcal{S}=\mathcal{S}(N,\overrightarrow{p})$, such that 
\begin{equation} \label{send} 
\|u\|_{L^{p^\ast}(\R^N)}\leq \mathcal{S}  \prod_{j=1}^N \| \partial_j  u\|_{L^{p_j}(\R^N)}^{1/N} \quad \mbox{for all } u\in C_c^\infty(\R^N),
\end{equation}
where, as usual, $p^\ast:=Np/(N-p)$.
\end{lemma}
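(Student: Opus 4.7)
The plan is to apply Stampacchia's lemma (Lemma~\ref{st}(i)) to $\varphi(k) := |A_k|$, where $A_k := \{x\in\Omega : |u(x)| > k\}$. The goal is to derive, for all $h>k\ge k_0$, a decay inequality
\begin{equation*}
|A_h| \leq C\,(h-k)^{-p^\ast}\,|A_k|^{\gamma},\qquad \gamma=\frac{p^\ast}{p-1}\Bigl(1-\frac{1}{r}-\frac{1}{p^\ast}\Bigr).
\end{equation*}
The assumption $r>N/p$ is exactly what forces $\gamma>1$ (using $p^\ast = Np/(N-p)$), so Lemma~\ref{st}(i) then yields $|A_{k_0+d}|=0$ with an explicit $d$, and tracking the constants gives \eqref{uinfty}.

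To reach this decay, I would test \eqref{ff1} with $v=G_k(u)$. This lies in $W_0^{1,\overrightarrow{p}}(\Omega)$ since $|G_k(u)|\le|u|$ and $\partial_j G_k(u) = \partial_j u\,\chi_{A_k}$; admissibility follows by first inserting $T_n(G_k(u))$ and letting $n\to\infty$, using \eqref{satis} together with dominated convergence to pass through the $\Phi_0$, $\Psi$, and $f$ integrals. Since $u$ and $G_k(u)$ share sign, the left-hand side of \eqref{ff1} dominates
\begin{equation*}
\sum_{j=1}^{N}\int_{A_k}|\partial_j u|^{p_j}\,dx \;+\; \mathfrak{a}_0\int_{A_k}|u|^{m-1}|G_k(u)|\,dx \;+\; \sum_{j=1}^{N}\mathfrak{a}_j\int_{A_k}|\partial_j u|^{p_j}|u|^{m-1}|G_k(u)|\,dx,
\end{equation*}
while the right-hand side equals $\sum_j\int_{A_k}|u|^{\theta_j-1}|\partial_j u|^{q_j}|G_k(u)|\,dx + \int_{A_k} f\,G_k(u)\,dx$.

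The decisive step is to absorb the singular gradient term index by index via Young's inequality, selected according to the dichotomy \eqref{cen4}. When $\mathfrak{a}_j>0$, I would apply Young with exponents $p_j/q_j$ and $p_j/(p_j-q_j)$, obtaining
\[
|u|^{\theta_j-1}|\partial_j u|^{q_j}|G_k(u)| \leq \varepsilon\,|\partial_j u|^{p_j}|u|^{m-1}|G_k(u)| + C_\varepsilon\,|u|^{\alpha_j}|G_k(u)|,
\]
with $\alpha_j = [(\theta_j-1)p_j - (m-1)q_j]/(p_j-q_j)$; the principal term is absorbed by the $\mathfrak{a}_j$-integral, while the bound $\theta_j < \max\{m,p\}(p_j-q_j)/p_j + mq_j/p_j$ from \eqref{cen4} yields $\alpha_j<m-1$ when $m\ge p$ (absorbable by the $\mathfrak{a}_0$-integral once $k_0$ is large, since $|u|>k$ on $A_k$) and $\alpha_j<p-1$ when $m<p$ (controllable by an $L^{p^\ast}$ estimate after writing $|u|\le|G_k(u)|+k$). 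When $\mathfrak{a}_j=0$, the same splitting instead produces a principal term $\varepsilon|\partial_j u|^{p_j}$ absorbed by the $p_j$-Laplacian integral on $A_k$; the residual $|u|^{(\theta_j-1)p_j/(p_j-q_j)}|G_k(u)|^{p_j/(p_j-q_j)}$ is controlled via \eqref{cen4}, which here reads $\theta_jp_j/(p_j-q_j)<\max\{m,p\}$.

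After absorption, Lemma~\ref{sob} applied to $G_k(u)$ combined with AM-GM gives $\|G_k(u)\|_{L^{p^\ast}(\Omega)}\leq (\mathcal{S}/N)\sum_j\bigl(\int_{A_k}|\partial_j u|^{p_j}\,dx\bigr)^{1/p_j}$, and a three-term H\"older estimate bounds
\[
\int_{A_k} f\,G_k(u)\,dx \leq \|f\|_{L^r(\Omega)}\,\|G_k(u)\|_{L^{p^\ast}(\Omega)}\,|A_k|^{1-1/r-1/p^\ast},
\]
which is valid because $r>N/p>(p^\ast)'$ under $p<N$. Combining these with the elementary $(h-k)|A_h|^{1/p^\ast}\leq\|G_k(u)\|_{L^{p^\ast}(\Omega)}$ produces the Stampacchia decay above. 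The main obstacle, and the reason the authors split the proof into several propositions of increasing generality, is that no single Young splitting covers the whole parameter range \eqref{cen4}: when $m<p$ and $\theta_j$ sits near the upper endpoint of \eqref{cen4}, one must combine the $L^{p^\ast}$ Sobolev channel with the $\mathfrak{a}_j$- and $p_j$-Laplacian absorption channels, and carefully balance the exponents of $|u|$ versus $|G_k(u)|$ on $A_k$ so that the final inequality closes with the required $\gamma>1$.
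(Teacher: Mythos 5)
Your proposal does not address the statement you were asked to prove. The statement is Lemma~\ref{sob}, the anisotropic Sobolev (Troisi) inequality $\|u\|_{L^{p^\ast}(\R^N)}\leq \mathcal{S}\prod_{j=1}^N\|\partial_j u\|_{L^{p_j}(\R^N)}^{1/N}$ for $u\in C_c^\infty(\R^N)$. This is a purely functional-analytic embedding result; it has nothing to do with equation \eqref{eq1}, with Stampacchia's iteration, or with the level sets $A_k$ of a solution. What you have written is instead a (reasonable) outline of the proof of the main boundedness result, Theorem~\ref{pre} --- and, tellingly, your argument explicitly \emph{invokes} Lemma~\ref{sob} (``Lemma~\ref{sob} applied to $G_k(u)$ combined with AM-GM gives\dots''), so it cannot possibly serve as a proof of that lemma without being circular. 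In the paper this lemma is not proved at all: it is recalled from Troisi's 1969 paper, and only its consequence \eqref{ASI} is derived in Remark~\ref{an-sob}.

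If you did want to prove Lemma~\ref{sob}, the standard route is entirely different. First one establishes the $L^1$-gradient case, $\|u\|_{L^{N/(N-1)}(\R^N)}\leq\prod_{j=1}^N\|\partial_j u\|_{L^1(\R^N)}^{1/N}$, by writing $|u(x)|\leq\int_{\R}|\partial_j u|\,dx_j$ for each coordinate direction, multiplying the $N$ resulting bounds, and integrating one variable at a time with the generalized H\"older inequality (the Gagliardo--Nirenberg argument). One then applies this inequality to $|u|^{t}$ for a suitable exponent $t>1$, uses H\"older with exponents $(p_j,p_j')$ on each factor $\int|u|^{t-1}|\partial_j u|\,dx$, and chooses $t$ so that the exponent $tN/(N-1)$ on the left matches $(t-1)p_j'$ averaged appropriately across $j$, which forces $t=p^\ast(N-1)/N$ and lets the surplus power of $\|u\|$ be absorbed into the left-hand side, yielding \eqref{send}. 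None of these ingredients appear in your proposal.
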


\begin{rem} \label{an-sob} Let $\Omega$ be a bounded, open subset of $\R^N$ with $N\geq 2$. If  $1 < p_j <\infty$ for every $1\le j \le N$ and $p<N$, then by a density argument, 
\eqref{send} extends to all $u\in  W_0^{1,\overrightarrow{p}}(\Omega) $ so that the arithmetic-geometric mean inequality yields
\begin{equation}\label{ASI}
\|u\|_{L^{p^\ast}(\Omega)}\leq \mathcal{S}  \prod_{j=1}^N \| \partial_j  u\|_{L^{p_j}(\Omega)}^{1/N} \leq 
\frac{\mathcal{S}}{N} \sum_{j=1}^N \| \partial_j  u\|_{L^{p_j}(\Omega)}=\frac{\mathcal{S}}{N}\|u\|_{W_0^{1,\overrightarrow{p}}(\Omega)}
\end{equation}
for all $u\in W_0^{1,\overrightarrow{p}}(\Omega)$. Moreover, 
using H\"older's inequality, the embedding $W_0^{1,\overrightarrow{p}}(\Omega)\hookrightarrow L^s(\Omega)$ is continuous 
for every
$s\in [1,p^\ast]$ and compact for every $s\in [1,p^\ast)$.
\end{rem}
 
In the sequel, we need the following version of Young's inequality. 

\begin{lemma}[Young's inequality] \label{young} Let $N\geq 2$ be an integer. Assume that $\beta_1,\ldots, \beta_N$ are positive numbers
	and $1<R_k<\infty$ for each $1\leq k\leq N-1$. If $\sum_{k=1}^{N-1} (1/R_k)<1$, then for every $\delta>0$, there exists
	a positive constant $C_\delta$ (depending on $\delta$) such that 
	$$ \prod_{k=1}^N \beta_k\leq \delta\sum_{k=1}^{N-1} \beta_k^{R_k} +C_\delta \,\beta_N^{R_N},  
	$$ where we define $R_N=\left[1-\sum_{k=1}^{N-1} (1/R_k)\right]^{-1}$. 
\end{lemma}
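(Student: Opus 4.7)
The plan is to apply Stampacchia's truncation method to the weak formulation \eqref{ff1} with the test function $v = G_k(u)$, $k \geq k_0 \geq 1$, where $G_k$ is defined in \eqref{gk}. Because \eqref{satis} ensures $\Phi_0(u,\nabla u)u$ and $\Psi(u,\nabla u)u$ lie in $L^1(\Omega)$, the function $G_k(u)$ is an admissible test (after a further $L^\infty$-truncation and a standard passage to the limit if needed). Since $G_k(u)$ and its weak gradient vanish off $A_k := \{|u|>k\}$ and $u\,G_k(u)\geq 0$ pointwise, testing yields
\begin{multline*}
\sum_{j=1}^N \int_{A_k} |\partial_j u|^{p_j}\,dx + \int_{A_k} \Phi_0(u,\nabla u)\,|G_k(u)|\,dx \\
\leq \sum_{j=1}^N \int_{A_k} |u|^{\theta_j-1}|\partial_j u|^{q_j}|G_k(u)|\,dx + \int_{A_k} |f|\,|G_k(u)|\,dx.
\end{multline*}

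The crux is to absorb the gradient-dependent terms on the right using Young's inequality (Lemma \ref{young}), splitting according to the two alternatives in \eqref{cen4}. If $\mathfrak{a}_j > 0$, I would pair the factor $|\partial_j u|^{q_j}$ against $\mathfrak{a}_j|\partial_j u|^{p_j}|u|^{m-1}$ coming from $\Phi_0$, via Young with exponents $p_j/q_j$ and $p_j/(p_j-q_j)$; if $\mathfrak{a}_j = 0$, I would absorb $|\partial_j u|^{q_j}$ directly into $\int_{A_k}|\partial_j u|^{p_j}\,dx$ on the left. In each case, the strict upper bound on $\theta_j$ in \eqref{cen4} is precisely what ensures that the residual power of $|u|$ produced by Young is either dominated by $\mathfrak{a}_0|u|^{m-1}|G_k(u)|$ (and absorbed by the $\mathfrak{a}_0$-term on the left) or, when the residual exponent is non-positive, by a constant (using $|u|\geq k\geq 1$ on $A_k$). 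The outcome is
\begin{equation*}
\sum_{j=1}^N \int_{A_k} |\partial_j u|^{p_j}\,dx \leq C \int_{A_k} (|f|+1)\,|G_k(u)|\,dx \qquad (k\geq k_0),
\end{equation*}
with $C$ depending only on the quantities listed in the statement.

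Set $Y_j := \|\partial_j G_k(u)\|_{L^{p_j}(\Omega)}$ and $X := \|G_k(u)\|_{L^{p^*}(\Omega)}$. Anisotropic Sobolev \eqref{ASI} yields $X \leq \mathcal{S}\prod_j Y_j^{1/N}$; combining with $Y_j \leq (\sum_i Y_i^{p_i})^{1/p_j}$ and $\sum_j 1/(Np_j)=1/p$ produces $X^p \leq C\sum_j Y_j^{p_j} = C\sum_j\int_{A_k}|\partial_j u|^{p_j}\,dx$. On the right of the previous display, Hölder with exponents $r$, $p^*$ and $(1-1/r-1/p^*)^{-1}$ gives
\begin{equation*}
\int_{A_k}(|f|+1)\,|G_k(u)|\,dx \leq C\bigl(\|f\|_{L^r(\Omega)}+1\bigr)\,X\,|A_k|^{1-1/r-1/p^*},
\end{equation*}
with the exponent on $|A_k|$ strictly positive since $r>N/p>(p^*)'$. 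Chaining the two displays and dividing by $X$ yields $X \leq C\,|A_k|^{\gamma}$ with $\gamma := (1-1/r-1/p^*)/(p-1)$. Setting $\varphi(k):=|A_k|$, the Chebyshev inequality $(h-k)\chi_{A_h}\leq |G_k(u)|$ on $A_h$ gives $\varphi(h) \leq (h-k)^{-p^*}X^{p^*} \leq C(h-k)^{-p^*}\varphi(k)^{\gamma p^*}$; a direct computation shows that $r > N/p$ is precisely what makes $\gamma p^*>1$. Lemma \ref{st}$(i)$ then produces an explicit $d$ with $\varphi(k_0+d)=0$, whence $\|u\|_{L^\infty(\Omega)} \leq k_0+d$, which is \eqref{uinfty}.

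The main obstacle---and the reason the authors split the argument into several propositions of increasing generality---is the delicate case analysis of Young's inequality in the absorption step: the bookkeeping of the interplay between $\theta_j$, $q_j$, $p_j$, $m$ and the sign of $\mathfrak{a}_j$ must be carried out term by term, using exactly \eqref{cen4} (equivalently \eqref{m}) to guarantee that every residual $|u|$-power is uniformly controllable on $A_k$ for all $k\geq k_0$. Once this uniformity is established, the Sobolev--Hölder--Stampacchia endgame above runs cleanly.
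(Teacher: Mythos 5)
Your proposal does not address the statement you were asked to prove. The statement is Lemma~\ref{young}, an elementary $N$-factor Young-type inequality for positive numbers $\beta_1,\dots,\beta_N$; what you have written instead is a proof sketch of Theorem~\ref{pre} (the Stampacchia truncation argument for the boundedness of solutions, in which Lemma~\ref{young} is merely one of the tools used in the absorption step). Nothing in your text establishes, or even attempts to establish, the inequality $\prod_{k=1}^N \beta_k\leq \delta\sum_{k=1}^{N-1}\beta_k^{R_k}+C_\delta\,\beta_N^{R_N}$.

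The lemma itself is a rescaled form of the generalized Young (weighted AM--GM) inequality and admits a two-line proof. Since $R_N=\bigl[1-\sum_{k=1}^{N-1}(1/R_k)\bigr]^{-1}$, the exponents satisfy $\sum_{k=1}^{N}(1/R_k)=1$ with $R_N\in(1,\infty)$, so for any positive $a_1,\dots,a_N$ one has $\prod_{k=1}^N a_k\leq \sum_{k=1}^N a_k^{R_k}/R_k$. Given $\delta>0$, set $\lambda_k=(\delta R_k)^{1/R_k}$ for $1\leq k\leq N-1$ and $\lambda_N=\prod_{k=1}^{N-1}\lambda_k^{-1}$, so that $\prod_{k=1}^N(\lambda_k\beta_k)=\prod_{k=1}^N\beta_k$. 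Applying the displayed inequality with $a_k=\lambda_k\beta_k$ gives
\begin{equation*}
\prod_{k=1}^N \beta_k\leq \sum_{k=1}^{N-1}\frac{(\delta R_k)\beta_k^{R_k}}{R_k}+\frac{\lambda_N^{R_N}}{R_N}\,\beta_N^{R_N}=\delta\sum_{k=1}^{N-1}\beta_k^{R_k}+C_\delta\,\beta_N^{R_N},
\end{equation*}
with $C_\delta=\lambda_N^{R_N}/R_N$, which depends only on $\delta$ and the exponents $R_1,\dots,R_{N-1}$. This is the argument you should supply; the Stampacchia machinery belongs to the proof of Theorem~\ref{pre}, not here.
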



\section{Boundedness of solutions}\label{s3}

We first assume that
\begin{equation}\label{cen}
\theta_j<m \quad \mbox{and} \quad \quad q_j \ge 0,  \quad \mbox{ for every } 1 \le j \le N.
\end{equation}

\begin{prop}\label{prop1}
Let \eqref{IntroEq0}--\eqref{ass} hold and $f\in L^{r}(\Omega)$ with $r>N/p$. 
 Assume Case~1 or Case~2.  
If $\min_{1 \le j \le N}\mathfrak{a}_j>0$ and \eqref{cen} holds, 
then for every solution 
 $u\in W_0^{1,\overrightarrow{p}}(\Omega) $ of \eqref{eq1} satisfying \eqref{satis}, we have $u\in L^\infty(\Omega)$. Moreover, there exists a positive constant $C$, which depends only on $\Omega, N, m,\overrightarrow{p}, \overrightarrow{q},r, \|f\|_{L^r(\Omega)}, \{ \mathfrak{a}_j\}_{0\leq j\leq N}$ and $\{\theta_j\}_{1\leq j\leq N}$, such that
\begin{equation}\label{uinfty1}  
 \|u\|_{L^\infty(\Omega)}\le C.
\end{equation}
\end{prop}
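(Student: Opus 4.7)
The plan is to run the Stampacchia iteration from Lemma \ref{st}(i) on $\varphi(k):=|\{|u|>k\}|=:|A_k|$, proving an estimate of the form $|A_h|\le C(h-k)^{-p^\ast}|A_k|^b$ with $b>1$. First I would justify taking $v=G_k(u)$ as a test function: although $G_k(u)$ need not be bounded, $T_h(G_k(u))\in W_0^{1,\overrightarrow p}(\Omega)\cap L^\infty(\Omega)$ is admissible in \eqref{ff1}, and letting $h\to\infty$ is legitimate via Fatou on the non-negative LHS contributions and dominated convergence on the RHS, using \eqref{satis} together with $|G_k(u)|\le|u|$. Noting that $\partial_jG_k(u)=\partial_ju\,\chi_{A_k}$ and $uG_k(u)=|u||G_k(u)|\ge 0$ on $A_k$, this yields
\begin{equation*}
\sum_{j=1}^N\!\int_{A_k}\!\!|\partial_j u|^{p_j}dx+\!\int_{A_k}\!\!\Bigl(\mathfrak a_0+\sum_j\mathfrak a_j|\partial_j u|^{p_j}\Bigr)|u|^{m-1}|G_k(u)|\,dx=\sum_j\!\int_{A_k}\!\!|u|^{\theta_j-1}|G_k(u)||\partial_j u|^{q_j}dx+\!\int_{A_k}\!\!fG_k(u)\,dx.
\end{equation*}

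Next I would absorb the singular term on the RHS. For each $j$ with $q_j>0$ I write $|u|^{\theta_j-1}|G_k(u)||\partial_j u|^{q_j}$ as $(|\partial_j u|^{p_j}|u|^{m-1}|G_k(u)|)^{q_j/p_j}$ times a remainder, and apply Young's inequality with exponents $p_j/q_j$ and $p_j/(p_j-q_j)$. The outcome is $\varepsilon|\partial_j u|^{p_j}|u|^{m-1}|G_k(u)|+C_\varepsilon|u|^{\gamma_j}|G_k(u)|$, where a direct calculation gives $\gamma_j=[p_j\theta_j-p_j-(m-1)q_j]/(p_j-q_j)$ and the hypothesis $\theta_j<m$ forces $\gamma_j<m-1$. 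For $q_j=0$ the term is already $|u|^{\theta_j-1}|G_k(u)|$ with $\theta_j-1<m-1$. A second Young's inequality then yields $|u|^{\gamma_j}|G_k(u)|\le\delta|u|^{m-1}|G_k(u)|+C_\delta|G_k(u)|$ when $\gamma_j>0$, while for $\gamma_j\le 0$ one bounds $|u|^{\gamma_j}\le k_0^{\gamma_j}$ on $A_k$ for $k\ge k_0\ge 1$. The $\varepsilon$-term is absorbed using $\mathfrak a_j>0$, and the $\delta$-term using $\mathfrak a_0>0$. For all $k\ge k_0$ this reduces the identity to
\begin{equation*}
\sum_j\int_{A_k}|\partial_j u|^{p_j}\,dx\le\int_{A_k}\bigl(|f|+C_1\bigr)|G_k(u)|\,dx.
\end{equation*}

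From Lemma \ref{sob} combined with the weighted AM--GM inequality (weights $p/(Np_j)$, summing to $1$) one obtains $\|G_k(u)\|_{L^{p^\ast}(\Omega)}^{p}\le C\sum_j\int_{A_k}|\partial_j u|^{p_j}dx$. Since $r>N/p>(p^\ast)'$, Hölder yields $\int_{A_k}(|f|+C_1)|G_k(u)|\,dx\le C_2(1+\|f\|_{L^r(\Omega)})|A_k|^{\tau}\|G_k(u)\|_{L^{p^\ast}(\Omega)}$ with $\tau=1/(p^\ast)'-1/r>0$. Combining and dividing by $\|G_k(u)\|_{L^{p^\ast}}^{p-1}$ gives $\|G_k(u)\|_{L^{p^\ast}}^{p-1}\le C_3|A_k|^{\tau}$. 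Since $(h-k)^{p^\ast}|A_h|\le\|G_k(u)\|_{L^{p^\ast}}^{p^\ast}$ for $h>k$, this produces $|A_h|\le C_4(h-k)^{-p^\ast}|A_k|^b$ with $b=p^\ast\tau/(p-1)$; an elementary computation shows $b>1\Leftrightarrow r>N/p$. Lemma \ref{st}(i) then yields $\varphi(k_0+d)=0$ for an explicit $d$, i.e.\ $|u|\le k_0+d$ a.e. In Case~2 this already gives \eqref{uinfty1}; in Case~1 the symmetric lower bound follows by applying the same argument to $-u$ with datum $-f$, using that $-\Delta_{\overrightarrow p}$, $\Phi_0$ and $\Psi$ are all odd in $(u,\nabla u)$. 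All constants can be tracked in terms of the stated parameters.

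The main obstacle is the absorption step: the singular term $|u|^{\theta_j-1}|G_k(u)||\partial_ju|^{q_j}$ is non-standard, and a carefully chosen \emph{double} application of Young's inequality is required to simultaneously produce a $|\partial_j u|^{p_j}|u|^{m-1}|G_k(u)|$ piece (absorbable via $\mathfrak a_j>0$) and a $|u|^{m-1}|G_k(u)|$ piece (absorbable via $\mathfrak a_0>0$), leaving a residual of the clean form $|G_k(u)|$ that the subsequent Hölder step can control by a positive power of $|A_k|$.
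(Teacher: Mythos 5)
Your proof is correct, and in fact takes a genuinely simpler route than the paper's once the inequality $\sum_j\int_{A(k)}|\partial_j u|^{p_j}\,dx\le\int_{A(k)}(|f|+C_1)|G_k(u)|\,dx$ is in hand. Up to that point you and the paper do essentially the same thing: a double Young's inequality splitting each singular term into a piece proportional to $|u|^{m-1}|\partial_j u|^{p_j}|G_k(u)|$ (absorbed via $\mathfrak a_j>0$), a piece proportional to $|u|^{m-1}|G_k(u)|$ (absorbed via $\mathfrak a_0>0$), and a lower-order residual; your exponent $\gamma_j=\eta_j\,p_j/(p_j-q_j)$ is exactly the paper's $\eta_j$ rescaled, and your case $\gamma_j\le 0$ matches the paper's subcase (b$_1$). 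The divergence is in the Stampacchia step. The paper splits into $m>p$ (where it powers up to $|G_k(u)|^p$, interpolates between $L^{p^\ast}$ and $L^p$, and absorbs the residual $\int|G_k(u)|^p$ using $\mathfrak a_0\int|u|^{m-1}|G_k(u)|$, which requires $m>p$) and $m\le p$ (where it bounds $|G_k(u)|\le|u|$, introduces $\|u\|_{L^\gamma}$, gets an exponent $\beta(\gamma)=(1-1/\gamma-1/r)p^\ast/p$ which need not exceed $1$ at $\gamma=p^\ast$, and runs a bootstrap via Lemma \ref{st}$\,iii)$ to raise $\gamma$ until $\beta(\gamma)>1$). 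You instead keep $|G_k(u)|$ to the first power, apply a three-way H\"older with exponents $r,p^\ast$ and $q$ satisfying $1/q=1-1/r-1/p^\ast=:\tau>0$, and use that the Sobolev left-hand side is $\|G_k(u)\|_{L^{p^\ast}}^p$ while the right-hand side carries only $\|G_k(u)\|_{L^{p^\ast}}^1$; cancelling one power of $\|G_k(u)\|_{L^{p^\ast}}$ yields $\|G_k(u)\|_{L^{p^\ast}}^{p-1}\le C_3|A(k)|^\tau$ directly, and the Stampacchia exponent $b=p^\ast\tau/(p-1)$ satisfies $b>1\Leftrightarrow r>N/p$ with no case split and no iteration. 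This is cleaner; it also removes the need for the paper's largeness conditions \eqref{k0}, \eqref{k00} on $k_0$ (you only need $k_0\ge1$, because you ship the residual $C'|G_k(u)|$ to the right-hand side rather than absorbing it into $\mathfrak a_0\int|u|^{m-1}|G_k(u)|$).

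Two small remarks. First, there is a slip of the pen: to pass from $\|G_k(u)\|_{L^{p^\ast}}^{p}\le C|A(k)|^\tau\|G_k(u)\|_{L^{p^\ast}}$ to $\|G_k(u)\|_{L^{p^\ast}}^{p-1}\le C|A(k)|^\tau$ you divide by $\|G_k(u)\|_{L^{p^\ast}}$, not by $\|G_k(u)\|_{L^{p^\ast}}^{p-1}$ as written; the conclusion you use is correct. Second, your closing remark that Case~1 requires applying the argument also to $-u$ is superfluous: since you take $v=G_k(u)$ (not $G_k(u^\pm)$) and $A_k=\{|u|>k\}$, all the signed products collapse to $|u|\,|G_k(u)|$ on $A_k$ exactly as you note, so $\varphi(k_0+d)=0$ already gives $\|u\|_{L^\infty}\le k_0+d$ for sign-changing $u$. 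Finally, a contextual point: the paper's more elaborate bootstrap in the $m\le p$ case of Proposition~\ref{prop1} is not wasted effort — the iteration scheme it sets up is genuinely needed in Propositions~\ref{prop2} and~\ref{prop3}, where terms with $q_j=0$, $\theta_j\ge m$ (or $j\in I_5$) cannot be absorbed and force a dependence on $\|u\|_{L^\gamma}$; your shortcut applies specifically to the hypotheses of Proposition~\ref{prop1}, namely \eqref{cen}, and would not carry over unchanged to the later propositions.
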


\begin{proof} 
 Let $u\in W_0^{1,\overrightarrow{p}}(\Omega) $ be a solution of  \eqref{eq1} satisfying \eqref{satis}. 
In what follows, we take $k\geq k_0>1$ as large as needed and define 
\begin{equation}\label{mu}
 A(k):=\{x\in \Omega: \ |u(x)|\geq k\},\quad \mu_u(k):=\mbox{meas}\,(A(k)).
\end{equation}
Note that $G_k(u)=u-T_k(u)\in W_0^{1,\overrightarrow{p}}(\Omega)$ and $\partial_j G_k(u)=\partial_j u$ on $A(k)$ for all $1\leq j\leq N$.  
The property \eqref{satis} enables us to take $v=G_k(u)$ as a test function in \eqref{ff1}. Hence, we have 
\begin{equation} \label{fifa}
\begin{aligned}
& \sum_{j=1}^N\int_{A(k)}|\partial_j u|^{p_j}\, dx+ \mathfrak a_0 \int_{A(k)} |u|^{m-1} |G_k(u)|\,dx+
\sum_{j=1}^N \mathfrak a_j  \int_{A(k)} |u|^{m-1} |\partial_j u|^{p_j}|G_k(u)|\,dx\\
&  \leq \sum_{j=1}^N \int_{A(k)} |u|^{\theta_j-1} |\partial_j u|^{q_j} |G_k(u)|\,dx   +\int_{A(k)} |f|\, |G_k(u)|\, dx.
\end{aligned}
\end{equation} 
In what follows, we let $\varepsilon\in (0,1)$ be small such that
\begin{equation} \label{eps}
3N \varepsilon<\mathfrak{a}_0, \quad N \varepsilon < \min_{j=1,\ldots,N}\mathfrak{a}_j
\end{equation}
and we let $k> k_0$ with $k_0>1$ large satisfying 
\begin{equation}\label{k0}
k_0\ge  (1/\varepsilon)^{1/(m-\max_{1\leq i\leq N} \theta_i)}.
\end{equation}
Let $1\leq j\leq N$ be fixed. 

(a) Assume first that $q_j=0$. By our hypotheses, we have $\theta_j<m$ so that
\begin{equation} \label{ford}
 \begin{aligned} 
\int_{A(k)} |u|^{\theta_j-1}  |G_k(u)|\,dx &\leq k^{\theta_j-m} \int_{A(k)} |u|^{m-1} \,|G_k(u)|\,dx\\
& \leq \varepsilon \int_{A(k)} |u|^{m-1} |G_k(u)|\,dx.\end{aligned}
\end{equation}

(b) Assume that $q_j>0$. We define \begin{equation} \label{etaj} 
\eta_j:= \theta_j-1- \frac{\left(m-1\right) q_j}{p_j}.\end{equation}

We distinguish the following sub-cases:

\vspace{0.2cm}
(b$_1$) Let $\eta_j\leq 0$. For simplicity of notation, we define
\begin{equation} \label{rak} R_k(u):=   \int_{A(k)} |G_k(u)|\,dx.\end{equation}
By H\"older's inequality and Young's inequality, for every $\varepsilon\in (0,1)$, there exists a constant $C_\varepsilon>0$, depending on $\varepsilon$, but independent of $k$, such that
\begin{equation} \label{n1}
 \begin{aligned} 
& \int_{A(k)} |u|^{\theta_j-1} |\partial_j u|^{q_j} |G_k(u)|\,dx\\
& \leq k^{\eta_j} \left( \int_{A(k)} |u|^{m-1} |\partial_j u|^{p_j}|G_k(u)|\,dx\right)^\frac{q_j}{p_j}
\left[ R_k(u)\right]^\frac{p_j-q_j}{p_j}\\
&\leq \varepsilon  \int_{A(k)} |u|^{m-1} |\partial_j u|^{p_j}|G_k(u)|\,dx+C_\varepsilon \,R_k(u).
\end{aligned}
\end{equation}

(b$_2$) Let $\eta_j>0$, where $\eta_j$ is defined in \eqref{etaj}. Remark that 
$$ 1-\frac{q_j}{p_j}-\frac{\eta_j}{m-1}=\frac{m-\theta_j}{m-1}\in (0,1).  
$$
Thus, using H\"older's inequality and \eqref{rak}, we arrive at
\begin{equation} \label{art2} 
\begin{aligned} 
&\int_{A(k)} |u|^{\theta_j-1} |\partial_j u|^{q_j} |G_k(u)|\,dx\\
& \leq 
 \left( \int_{A(k)} |u|^{m-1} |\partial_j u|^{p_j}|G_k(u)|\,dx\right)^\frac{q_j}{p_j} \left(\int_{A(k)} |u|^{m-1} |G_k(u)|\,dx \right)^\frac{\eta_j}{m-1}
  [R_k(u)]^{\frac{m-\theta_j}{m-1}}.
\end{aligned} \end{equation}
By Young's inequality in Lemma~\ref{young}, for every $\varepsilon\in (0,1)$, there exists a constant $C_\varepsilon>0$ such that
the right-hand side of the inequality in \eqref{art2} is bounded above by 
\begin{equation}\label{n2}
 \varepsilon  \int_{A(k)} |u|^{m-1} |\partial_j u|^{p_j}|G_k(u)|\,dx+\varepsilon \int_{A(k)} |u|^{m-1} |G_k(u)| \,dx+C_\varepsilon \,R_k(u).
\end{equation}
Letting $k_0$ sufficiently large so that
\begin{equation}\label{k00}
k_0\ge\left(\frac{3NC_\varepsilon}{\mathfrak{a}_0}\right)^{\frac{1}{m-1}},
\end{equation}
from the inequalities \eqref{fifa}--\eqref{n2}, we  obtain
\begin{equation} \label{mel} 
\sum_{j=1}^N\int_{A(k)}|\partial_j u|^{p_j}\, dx +\frac{\mathfrak{a}_0}{3}\int_{A(k)}|u|^{m-1}|G_k(u)|\, dx\leq  \int_{A(k)} |f|\, |G_k(u)|\, dx.
 \end{equation} 
 Now we focus our attention on the left-hand side of \eqref{mel}. By Lemma~\ref{sob}, there exists a positive constant
$\mathcal{S}=\mathcal{S}(N,\overrightarrow{p})$ such that 
\begin{equation} \label{art1}  
\begin{aligned}
\left(\int_{A(k)}|G_k(w)|^{p^\ast}\, dx \right)^{\frac{1}{p^\ast}}& \leq \mathcal S 
\prod_{j=1}^N \left(\int_{A(k)}|\partial_j G_k(w)|^{p_j}\, dx\right)^{\frac{1}{Np_j}}\\
& \leq 
\mathcal S 
 \left( \sum_{\ell=1}^N\int_{A(k)}|\partial_\ell w|^{p_\ell}\, dx\right)^\frac{1}{p}
\end{aligned}
\end{equation} for all $w\in W_0^{1,\overrightarrow{p}}(\Omega)$. Moreover, taking $h>k$ yields
\begin{equation}\label{nnn}
(h-k)^{p^\ast}\mu_w(h)\le \int_{A(k)}|G_k(w)|^{p^\ast}\,dx.
\end{equation}

We now distinguish different cases.

($\bullet$) Let $m>p$.
Thus, by using the H\"older inequality, we can bound from above the right-hand side of \eqref{mel} as follows
\begin{equation}\label{z7}
\begin{aligned}
\int_{A(k)} |f|\, |G_k(u)|\,dx 
&\le \int_{A(k)}|f|\, dx+ \int_{A(k)} |f|\,|G_k(u)|^p\, dx
\\
& \le\int_{A(k)}|f|\, dx+ \|f\|_{L^r(\Omega)} \left(\int_{A(k)}|G_k(u)|^{pr'}\,dx\right)^{\frac{1}{r'}}.
\end{aligned}
\end{equation} 
Since, by assumption, $r> N/p$, which implies $p r'<p^\ast$, by the interpolation inequality we get that there exists $0<\xi<1$ such that $1/(pr')=\xi/p^\ast+(1-\xi)/p$ and 
$$
\left(\int_{A(k)}  |G_k(u)|^{pr'}\, dx \right)^{\frac{1}{r'}} 
\le \left(\int_{A(k)}  |G_k(u)|^{p^\ast}\, dx \right)^{\frac{\xi p}{p^\ast}}\left(\int_{A(k)}  |G_k(u)|^p\, dx \right)^{1-\xi}.
$$
Fix $\delta >0$. Young's inequality yields the existence of a positive constant $C_\delta$ such that
\begin{equation}\label{z8}
\begin{aligned}
& \|f\|_{L^r(\Omega)} \left(\int_{A(k)}|G_k(u)|^{pr'}\right)^{\frac{1}{r'}} 
\\
&\le \delta \|f\|_{L^r(\Omega)}^{\frac 1 \xi} \left(\int_{A(k)}|G_k(u)|^{p^\ast}\, dx \right)^{\frac{p}{p^\ast}}+C_\delta\int_{A(k)} |G_k(u)|^p\, dx.
\end{aligned}
\end{equation}
From \eqref{mel}--\eqref{z8}, we deduce that
\begin{equation}
\begin{aligned}
&\frac{1}{\mathcal{S}^p}\left(\int_{A(k)}  |G_k(u)|^{p^\ast}\, dx \right)^{\frac{p}{p^\ast}}+\frac{\mathfrak{a}_0}{3}\int_{A(k)}|u|^{m-1}\, |G_k(u)|\, dx
\\
 &\le \delta \|f\|_{L^r(\Omega)}^{\frac 1 \xi} \left(\int_{A(k)} |G_k(u)|^{p^\ast}\, dx \right)^{\frac{p}{p^\ast}}+
C_\delta \int_{A(k)}  |G_k(u)|^p \, dx
+ \int_{A(k)}|f|\, dx.
\end{aligned}
\end{equation}
 We choose $\delta>0$ small enough  and $k_0>1$ large enough so that
 $$
 \delta \le \frac{\|f\|_{L^r(\Omega)}^{-\frac 1 \xi}}{2\mathcal{S}^p}, \quad k_0> \left(\frac{3\mathcal{S}^pC_\delta}{\mathfrak{a}_0}\right)^{\frac{1}{m-p}}.
 $$ 
Finally,  by H\"older's inequality we have
\begin{equation}\label{mmm}
\left(\int_{A(k)}  |G_k(u)|^{p^\ast}\, dx \right)^{\frac{p}{p^\ast}} \le 2 \mathcal{S}^p \int_{A(k)}|f|\, dx
\le 2 \mathcal{S}^p\|f\|_{L^r(\Omega)}\mu_u(k)^{1-\frac 1 r}.
\end{equation}
By combining \eqref{nnn} with \eqref{mmm}, we arrive at
$$
\mu_u(h) \le \frac{C\> }{(h-k)^{p^\ast}}\mu_u(k)^{\frac{p^\ast}{pr'}},
$$
with 
$$C=\left(2\mathcal{S}^p\|f\|_{L^r(\Omega)}\right)^{\frac{p^\ast}{p}}.$$
Since $p^\ast>pr'$, we can apply Lemma \ref{st} $i)$. Hence,  $\mu_u(k_0+d)=0$ with $d$ given by
$$
d=C^{\frac {1}{ p^\ast}}\mu_u(k_0)^{\frac{1}{pr'}-\frac{1}{p^\ast}}2^{\frac{N(r-1)}{pr-N}}\le C^{\frac {1}{ p^\ast}}|\Omega|^{\frac{1}{pr'}-\frac{1}{p^\ast}}2^{\frac{N(r-1)}{pr-N}}.
$$
This immediately implies that $u \in L^\infty (\Omega)$ with
$$
\|u\|_{L^\infty(\Omega)}\le k_0+d.
$$
 
\smallskip
($\bullet\bullet$) We now let $m \le p$.
Since $f\in L^r(\Omega)$ with $r>N/p$ and $p<N$, we have $1/\gamma+1/r<1$ for every $\gamma>p^\ast/p$. 
Assume that $u\in L^\gamma(\Omega)$ for some $\gamma\geq  p^\ast$.  
Hence, by H\"older's inequality, we find that
\begin{equation} \label{ford0} 
\int_{A(k)} |f|\, |u|\, dx\leq \|u\|_{L^\gamma(\Omega)} \|f\|_{L^r(\Omega)} \left(\mu_u(k)\right)^{1-\frac{1}{\gamma}-\frac{1}{r}}.
\end{equation}

For any $\Gamma> p^\ast/p$, we define 
\begin{equation} \label{ite} \beta(\Gamma):=\left(1-\frac{1}{\Gamma}-\frac{1}{r}\right)\frac{p^\ast}{p}>0.\end{equation}

Using \eqref{mel}, \eqref{art1}, \eqref{nnn} and \eqref{ford0}, we obtain that
\begin{equation} \label{neo}
\mu_u(h)\leq \frac{\mathcal{C}\>\|u\|_{L^\gamma(\Omega)}^{\frac{p^\ast}{p}}}{ \left(h-k\right)^{p^\ast}}  
\left(\mu_u(k)\right)^{
\beta(\gamma) },
\end{equation}
where
$$
\mathcal{C}=\mathcal{S}^{p^\ast} \|f\|_{L^r(\Omega)}^{\frac{p^\ast}{p}}.
$$
We claim that 
\begin{equation}\label{b1}
\mbox{there exists }\gamma=\gamma(N,p,r) \ge p^\ast \mbox{  such that }
\beta(\gamma)>1.
\end{equation} 
If so, from Lemma \ref{st} $i)$ we deduce that
 $\mu_u(k_0+d)=0$ with $d$ given by
$$
d=\mathcal{C}^{\frac {1}{ p^\ast}} \|u\|_{L^\gamma(\Omega)}^{\frac 1 p} \mu_u(k_0)^{\frac{\beta(\gamma)-1}{p^\ast}}2^{\frac{\beta(\gamma)}{\beta(\gamma)-1}}.$$
By Young's inequality, for every $\tau >0$ there exists $C_\tau>0$ such that  
$$
d\le \mathcal{C}^{\frac {1}{ p^\ast}}\left(\tau \|u\|_{L^\infty(\Omega)}+C_\tau\right) |\Omega|^{\frac{1}{pr'}-\frac{1}{ p^\ast}}2^{\frac{\beta(\gamma)}{\beta(\gamma)-1}}.
$$
Since $\|u\|_{L^\infty(\Omega)}\le k_0+d$, by choosing $\tau=\tau\left(N,p,r,|\Omega|,\|f\|_{L^r(\Omega)}\right)$ small, we conclude \eqref{uinfty1}.
 
In order to prove \eqref{b1}, we now distinguish two cases:

\vspace{0.2cm}
{\bf Case (A).} Let $\beta(\gamma)>1$ for $\gamma=p^\ast$. The claim follows. 

\vspace{0.2cm}
{\bf Case (B).} Assume that Case (A) fails. If $\beta(p^\ast)=1$, then we choose $\gamma_1\in (p^\ast/p,p^\ast)$ so that $\beta(\gamma_1)\in (0,1)$. Since $u\in L^{p^\ast}(\Omega)\subseteq L^{\gamma_1}(\Omega)$, we can take $\gamma=\gamma_1$ in \eqref{neo}.
Then, by Lemma~\ref{st} $iii)$, we find that $u\in L^{\gamma_2}(\Omega)$ for every $p^\ast<\gamma_2<p^\ast/(1-\beta(\gamma_1))$. 
 Hence, $\beta(\gamma_2)>\beta(p^\ast)=1$ and the claim follows with $\gamma=\gamma_2$. It remains to treat the case $\beta(p^\ast)<1$. 

\noindent Take $\gamma=\gamma_1=p^\ast$ in \eqref{neo}.
Then, by Lemma~\ref{st} $iii)$, we find that $u\in L^{\gamma_2}(\Omega)$ for every $p^\ast<\gamma_2<p^\ast/(1-\beta(\gamma_1))$. We distinguish three cases:

(B$_1$) If $\beta(\gamma_2)>1$, then the claim follows with $\gamma=\gamma_2$.

(B$_2$) If $\beta(\gamma_2)=1$, then $u \in L^{{\widetilde\gamma}_2}(\Omega)$ for any $\gamma_2<\widetilde \gamma_2<p^\ast/(1-\beta(\gamma_1))$. Since $\beta(\Gamma)$ is increasing in $\Gamma$, we have $\beta(\widetilde \gamma_2)>\beta(\gamma_2)=1$. As in the previous case, we conclude the claim with $\gamma=\widetilde \gamma_2$.

(B$_3$) If $\beta(\gamma_2)<1$, then $u \in L^{\gamma_3}(\Omega)$ for every $\gamma_3$ satisfying
$$
\frac{p^\ast}{1-\beta(\gamma_1)}<\gamma_3<\frac{p^\ast}{1-\beta(\gamma_2)}.
$$
We now compute $\beta(\gamma_3)$ and we iterate the previous cases (B$_1$)--(B$_3$). We claim that there exists $n \ge 3$ such that $u \in L^{\gamma_n}(\Omega)$ with $\beta(\gamma_n) \ge 1$ and $0<\beta(\gamma_i)<1$ for every $2\le i \le n-1$, where
\begin{equation}\label{pap11}
\frac{p^\ast}{1-\beta(\gamma_{i-1})}<\gamma_{i+1}<\frac{p^\ast}{1-\beta(\gamma_i)}.
\end{equation}
Indeed, if this were not the case, then $\{\beta(\gamma_i)\}_{i\geq 1}$ would be an increasing sequence converging to $L\leq 1$ as $i\to \infty$. Moreover, from \eqref{pap11}, we have
$$
\left(
1-\frac{1-\beta(\gamma_{i-1})}{p^\ast} -\frac 1 r\right)\frac{p^\ast}{p}  \leq \beta(\gamma_{i+1}) \le \left(
1-\frac{1-\beta(\gamma_{i})}{p^\ast} -\frac 1 r\right) \frac{p^\ast}{p}
$$ 
for every $i\geq 2$. 
By letting $i\to \infty$, we arrive at
$$ L=\left(1-\frac{1-L}{p^\ast} -\frac{1}{r} \right)\frac{p^\ast}{p}. 
$$ 
However, $r>N/p$ implies that $L>1$, which is a contradiction. 

If $\beta(\gamma_n)> 1$ (respectively, $\beta(\gamma_n)=1$),  with the same arguments as in (B$_1$) (respectively, (B$_2$)) with $\gamma_n$ instead of $\gamma_2$, we conclude our claim with $\gamma=\gamma_n$ (respectively, $\gamma=\widetilde \gamma_n$).
\end{proof} 

\begin{rem}\label{rem1}
Let us stress that in case (a), when $q_j=0$,  the restriction $\mathfrak{a}_j>0$ is not needed.
\end{rem}

\medskip

In our next result, instead of \eqref{cen}, we assume a potentially weaker condition, that is
\begin{equation} \label{cen2}
\begin{aligned}
&  \theta_j<m\ \mbox{whenever } 1\leq j\leq N\ \mbox{and } q_j>0,\\
& \theta_j<\max\{p,m\}\ \mbox{whenever } 1\leq j\leq N \ \mbox{and }q_j=0.
\end{aligned}
\end{equation}

\begin{prop} \label{prop2}
Let \eqref{IntroEq0}--\eqref{ass} hold and $f\in L^{r}(\Omega)$ with $r>N/p$. 
 Assume Case~1 or Case~2.  If $\min_{1 \le j \le N}\mathfrak{a}_j>0$ and 
\eqref{cen2} holds, then for every solution 
 $u\in W_0^{1,\overrightarrow{p}}(\Omega) $ of \eqref{eq1} satisfying \eqref{satis}, we have $u\in L^\infty(\Omega)$. Moreover, there exists a positive constant $C$, which depends only on $\Omega, N, m,\overrightarrow{p}, \overrightarrow{q},r, \|f\|_{L^r(\Omega)}, \{ \mathfrak{a}_j\}_{0\leq j\leq N}$ and $\{\theta_j\}_{1\leq j\leq N}$, such that
\begin{equation}\label{uinfty2}  
 \|u\|_{L^\infty(\Omega)}\le C.
\end{equation}
\end{prop}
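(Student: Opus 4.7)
\medskip

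\noindent\textbf{Proof proposal for Proposition \ref{prop2}.}
The plan is to show that the only truly new case relative to Proposition~\ref{prop1} is when $m<p$ and there exists some index $j$ with $q_j=0$ and $m\le \theta_j<p$, and to adapt the iteration argument (Case~(B) in the proof of Proposition~\ref{prop1}) so as to absorb this new contribution. Indeed, if $m\ge p$ then $\max\{p,m\}=m$, so \eqref{cen2} collapses to \eqref{cen} and the statement follows directly from Proposition~\ref{prop1}. Hence throughout we assume $m<p$ and let
$$
J:=\{\,1\le j\le N:\ q_j=0,\ m\le \theta_j<p\,\}.
$$

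First I would test \eqref{ff1} against $v=G_k(u)$ as in \eqref{fifa}. For $j\notin J$ the same splits (a), (b$_1$), (b$_2$) of Proposition~\ref{prop1} yield absorption into the left-hand side under \eqref{eps}--\eqref{k00}, producing the analogue of \eqref{mel}. For $j\in J$ the old case (a) fails because $\theta_j\ge m$, so I would instead bound this term as a forcing term: since $|G_k(u)|\le |u|$ on $A(k)$ and $\theta_j<p<p^\ast$, for any $\gamma\ge p^\ast$ for which $u\in L^\gamma(\Omega)$, H\"older's inequality gives
\begin{equation*}
\int_{A(k)}|u|^{\theta_j-1}|G_k(u)|\,dx\le \int_{A(k)}|u|^{\theta_j}\,dx\le \|u\|_{L^\gamma(\Omega)}^{\theta_j}\,\mu_u(k)^{1-\theta_j/\gamma}.
\end{equation*}
Keeping this term on the right-hand side of the modified \eqref{mel}, and combining the Sobolev inequality \eqref{art1} on the left with the estimate \eqref{nnn} and the same H\"older bound on $\int_{A(k)}|f||G_k(u)|\,dx$ used in Proposition~\ref{prop1}, I would arrive at
\begin{equation*}
\mu_u(h)\le \frac{C}{(h-k)^{p^\ast}}\Bigl(\|u\|_{L^\gamma(\Omega)}^{p^\ast/p}\,\mu_u(k)^{\beta(\gamma)}+\sum_{j\in J}\|u\|_{L^\gamma(\Omega)}^{\theta_j p^\ast/p}\,\mu_u(k)^{\beta_j(\gamma)}\Bigr),
\end{equation*}
where $\beta(\gamma)$ is as in \eqref{ite} and $\beta_j(\gamma):=(1-\theta_j/\gamma)\,p^\ast/p$ for $j\in J$.

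The main work, and the principal obstacle, is the iteration step. I would iterate exactly in the spirit of cases (B$_1$)--(B$_3$) of Proposition~\ref{prop1}, but with the new requirement that \emph{both} $\beta(\gamma)$ and $\min_{j\in J}\beta_j(\gamma)$ must eventually exceed $1$ so as to apply Lemma~\ref{st}$(i)$. This is possible because $r>N/p$ and each $\theta_j<\infty$ force
\begin{equation*}
\lim_{\gamma\to\infty}\beta(\gamma)=\lim_{\gamma\to\infty}\beta_j(\gamma)=\frac{p^\ast}{p}>1,
\end{equation*}
so by taking the minimum of the two exponents in the above inequality at each step and applying Lemma~\ref{st}$(iii)$, the auxiliary sequence $\{\gamma_i\}$ cannot stall below a finite limit (the fixed-point argument at the end of the proof of Proposition~\ref{prop1} carries through with the worst of the two exponents). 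Once a level $\gamma_n$ with both exponents $>1$ is reached, Lemma~\ref{st}$(i)$ yields $\mu_u(k_0+d)=0$. Finally I would close the argument by inserting $\|u\|_{L^\gamma(\Omega)}\le |\Omega|^{1/\gamma}\|u\|_{L^\infty(\Omega)}$ into the expression for $d$ and using Young's inequality exactly as after \eqref{b1} in Proposition~\ref{prop1}, yielding the uniform bound \eqref{uinfty2} depending only on the data listed in the statement.
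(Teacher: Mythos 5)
Your proposal is correct and reaches the same conclusion by a genuinely different estimate of the new terms. For $j\in J$ (i.e.\ $q_j=0$, $m\le\theta_j<p$), you use $|G_k(u)|\le|u|$ and H\"older to get the crude bound $\int_{A(k)}|u|^{\theta_j}\,dx\le\|u\|_{L^\gamma}^{\theta_j}\,\mu_u(k)^{1-\theta_j/\gamma}$, leading to the exponent
$\beta_j(\gamma)=(1-\theta_j/\gamma)p^\ast/p$. The paper instead H\"older-splits against $\|G_k(u)\|_{L^{p^\ast}(A(k))}$, converts that factor to the gradient side via the anisotropic Sobolev inequality \eqref{art1}, and absorbs it with Young's inequality in the pair $(p,p')$; this gives the sharper exponent $\widehat\beta(\gamma)=\bigl(1-\tfrac{\theta-1}{\gamma}-\tfrac{1}{p^\ast}\bigr)\tfrac{p^\ast}{p-1}$, where $\theta=\max_{j\in J}\theta_j$. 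One checks readily that $\widehat\beta(\gamma)>\beta_j(\gamma)$ for the worst $j$, so the paper's bound is tighter at every $\gamma$, but this only affects the speed of the bootstrap, not its outcome: the fixed-point equation $L=\beta_j\bigl(p^\ast/(1-L)\bigr)$ gives $L=(p^\ast-\theta)/(p-\theta)$, exactly the same value as the paper obtains from $\widehat\beta$, and this exceeds $1$ precisely because $\theta<p$. So your iteration terminates for the same reason as the paper's.

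Two small remarks. First, the claimed limit $\lim_{\gamma\to\infty}\beta(\gamma)=p^\ast/p$ should read $(1-\tfrac1r)\,p^\ast/p=p^\ast/(pr')$; both are $>1$ under $r>N/p$, so nothing is affected. Second, your phrase ``the fixed-point argument carries through with the worst of the two exponents'' deserves an explicit computation: one must actually solve the stall equation for $\beta_j$ and verify $(p^\ast-\theta)/(p-\theta)>1$, which is where the restriction $\theta<p$ from \eqref{cen2} is decisive. With that spelled out, the argument is complete.
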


\begin{proof} If $m\geq p$ or \eqref{cen} holds, then the claim follows from Proposition~\ref{prop1}. Hence, we assume that $m<p$ and there exists 
$1\leq j_0\leq N$ such that $ q_{j_0}=0$ and $m\leq \theta_{j_0}<p$.  
As in the proof of Proposition \ref{prop1}, 
we let $k>1$ and take $v=G_k(u)$ in \eqref{ff1} to recover \eqref{fifa}. 

Let $m < p$. Assume that $u \in L^\gamma(\Omega)$ for some $\gamma \geq p^\ast$. 
For $1 \le j\le N$, whenever $q_j>0$, we let $\eta_j$ be as in \eqref{etaj}. Let us define
$$
\begin{aligned}
&I_1:=\{1\le j \le N: q_j=0, \> m \le \theta_j <p\}, & &I_2:=\{1\le j \le N: q_j=0, \> \theta_j <m\},&
\\
&I_3:=\{1\le j \le N: q_j>0, \> \eta_j \le 0\}, & &I_4:=\{1\le j \le N: q_j>0, \> \eta_j >0, \> \theta_j <m\}.&
\end{aligned}
$$
Fix $\varepsilon>0$ small as needed and $k>k_0$ with $k_0>1$ sufficiently large. 

$(a_1)$ Assume that $I_1 \ne \emptyset$ and let $j \in I_1$. 
By H\"older's inequality and \eqref{art1} with $w=u$, we infer that
$$
\begin{aligned}
\int_{A(k)}|u|^{\theta_j-1}\, |G_k(u)|\, dx &\le \left(\int_{A(k)}|G_k(u)|^{p^\ast}\, dx \right)^{\frac{1}{p^\ast}}\left(\int_{A(k)}|u|^\gamma\, dx \right)^{\frac{\theta_j-1}{\gamma}}\left(\mu_u(k)\right)^{1-\frac{\theta_j-1}{\gamma}-\frac{1}{p^\ast}}
\\
&\le \mathcal{S}\left(\sum_{\ell=1}^N \int_{A(k)}|\partial_\ell G_k(u)|^{p_\ell}\, dx\right)^{\frac 1 p}\left(\int_{A(k)}|u|^\gamma\, dx \right)^{\frac{\theta_j-1}{\gamma}}\left(\mu_u(k)\right)^{1-\frac{\theta_j-1}{\gamma}-\frac{1}{p^\ast}}.
\end{aligned}
$$
By Young's inequality, there exists a positive constant $C_\varepsilon$ such that
\begin{equation}\label{car1}
\begin{aligned}
\int_{A(k)}|u|^{\theta_j-1}\, |G_k(u)|\, dx
\le &\varepsilon \sum_{\ell=1}^N \int_{A(k)}|\partial_\ell G_k(u)|^{p_\ell}\, dx
\\
&+C_\varepsilon \|u\|_{L^\gamma(\Omega)}^{p'(\theta_j-1)}\left(\mu_u(k)\right)^{p'\left(1-\frac{\theta_j-1}{\gamma}-\frac{1}{p^\ast}\right)}.
\end{aligned}
\end{equation}

$(a_2)$ Assume that $I_2 \cup I_3 \cup I_4 \ne \emptyset$ and let $j \in I_2 \cup I_3 \cup I_4$.  Then, arguing as in the proof of Proposition \ref{prop1}, using \eqref{ford} and \eqref{n1}--\eqref{n2} we arrive at
\begin{equation}\label{car2}
\begin{aligned}
\sum_{j \in I_2 \cup I_3\cup I_4} \int_{A(k)} |u|^{\theta_j-1}\, |\partial_j u|^{q_j}\, |G_k(u)|\, dx \le & \varepsilon \sum_{j \in I_3\cup I_4} \int_{A(k)} |u|^{m-1}\, |\partial_j u|^{p_j}\, |G_k(u)|\, dx
\\
&+\varepsilon \left(|I_2|+|I_4|\right)\int_{A(k)} |u|^{m-1}\, |G_k(u)|\, dx
\\
&+C_\varepsilon\left(|I_3|+|I_4|\right)R_k(u),
\end{aligned}
\end{equation}
where $R_k(u)$ is defined by \eqref{rak}.

Let $\theta=\max_{j \in I_1}\theta_j$. From \eqref{fifa}, \eqref{ford0}, \eqref{car1} and \eqref{car2}, choosing $\varepsilon\in (0,1/(2N))$ small as in \eqref{eps} and $k_0>1$ large as in \eqref{k0} and \eqref{k00}, we get that
$$
\begin{aligned}
\sum_{\ell=1}^N\int_{A(k)}|\partial_\ell u|^{p_\ell}\, dx \le &2C_\varepsilon \left(\sum_{j \in I_1} \|u\|_{L^\gamma(\Omega)}^{p'(\theta_j-1)}\left(\mu_u(k)\right)^{p'\left(1-\frac{\theta_j-1}{\gamma}-\frac{1}{p^\ast}\right)}\right)
\\
&+2\|u\|_{L^\gamma(\Omega)} \|f\|_{L^r(\Omega)} \left(\mu_u(k)\right)^{1-\frac{1}{\gamma}-\frac{1}{r}}
\\
\
\le& M \,\mu_u(k)^{\min\left\{p'\left(1-\frac{\theta-1}{\gamma}-\frac{1}{p^\ast}\right),1-\frac{1}{\gamma}-\frac{1}{r}\right\}},
\end{aligned}
$$
where  $C_\Omega$ is a positive constant depending on $|\Omega|$, $p$, $N$, and $r$, and
$$
M:=C_\Omega \,\left(C_\epsilon\left(\sum_{j \in I_1} \|u\|_{L^\gamma(\Omega)}^{p'(\theta_j-1)}\right)+\|u\|_{L^\gamma(\Omega)} \|f\|_{L^r(\Omega)} \right).
$$
Taking $h>k\ge k_0$ and using \eqref{art1} and \eqref{nnn}, we obtain that
\begin{equation} \label{neop}
\mu_u(h) \le \frac{\mathcal{S}^{p^\ast} M^{\frac{p^\ast}{p}}}{(h-k)^{p^\ast}}  \,\mu_u(k)^{\min \left\{\widehat\beta(\gamma), \beta(\gamma) \right\}},
\end{equation} 
where $\widehat\beta(\gamma)$ and $\beta(\gamma)$ are defined by 
\begin{equation} \label{pap1} 
 \widehat \beta(\gamma):= \left(1-\frac{\theta-1}{\gamma}-\frac{1}{p^\ast}\right)\frac{p^\ast}{p-1}, \quad \beta(\gamma):=  \left(1-\frac{1}{\gamma}-\frac{1}{r}
\right)\frac{p^\ast}{p}.
\end{equation} 
It is easy to check that 
\begin{equation} \label{dift} \widehat \beta(\gamma)-\beta(\gamma)= \left(\frac{1}{N(p-1)}+\frac{1}{pr}+\frac{2p-1-p\,\theta}{\gamma p(p-1)}
\right)p^\ast. 
\end{equation}
We distinguish two cases:

\vspace{0.2cm}
$(E_1)$ Let $2p-1-p\,\theta\geq 0$. Then, $\widehat\beta(\gamma)-\beta(\gamma)>0$. Hence, $\min\{\beta(\gamma),\widehat \beta(\gamma)\}=\beta(\gamma)$. In this case, we follow the argument given after \eqref{neo} in the proof
of Proposition~\ref{prop1}.

\vspace{0.2cm}
$(E_2)$ Let $2p-1-p\,\theta<0$. Then, from \eqref{dift}, we see that 
\begin{equation} \label{inc} 
\widehat \beta(\gamma)-\beta(\gamma)\ \mbox{ increases in } \gamma. \end{equation} 

\vspace{0.2cm}
$\bullet$ If $\widehat \beta(p^\ast)-\beta(p^\ast)>0$, then using that $\widehat \beta(\gamma)-\beta(\gamma)>0$ for every $\gamma\geq p^\ast$, we can proceed as in Case $(E_1)$ to finish the proof. 

\vspace{0.2cm}
$\bullet$ If $\widehat \beta(p^\ast)-\beta(p^\ast)\leq 0$, then $\min\{\beta(\gamma),\widehat \beta(\gamma)\}=\widehat \beta(\gamma)$ for every $\gamma\leq p^\ast$ and close to $p^\ast$.  As in the proof of Proposition~\ref{prop1} (after \eqref{neo}), working here with $\widehat \beta(\gamma)$ instead of $\beta(\gamma)$, we have two situations:  

{\bf Case (A).} Let $\widehat \beta(\gamma)>1$ for $\gamma=p^\ast$.  Then, by Lemma~\ref{st} $i)$ we get that 
$u\in L^\infty(\Omega)$.

{\bf Case (B).}  Let $\widehat \beta(p^\ast)\leq 1$. If $\widehat \beta(p^\ast)=1$, then 
we choose $\gamma_1<p^\ast$ and close to $p^\ast$ so that  
$\widehat \beta(\gamma_1)\in (0,1)$. 
Since $u\in L^{p^\ast}(\Omega)\subseteq L^{\gamma_1}(\Omega)$, we can take $\gamma=\gamma_1$ in \eqref{neop}.
Then, by Lemma~\ref{st} $iii)$, we find that $u\in L^{\gamma_2}(\Omega)$ for every $p^\ast<\gamma_2<p^\ast/(1-\widehat\beta(\gamma_1))$. 
We have $\widehat \beta(\gamma_2)>\widehat\beta(p^\ast)=1$ and 
$\beta(\gamma_2)>\beta(p^\ast)\geq \widehat \beta(p^\ast)=1$. Hence, $\min\{ \beta(\gamma_2),\widehat \beta(\gamma_2)\}>1$ and thus $u\in L^\infty(\Omega)$ as in Case~(A) with $\gamma=\gamma_2$. 

It remains to treat the case $\widehat\beta(p^\ast)<1$. Here, we take $\gamma_1=p^\ast$ and, 
as before, we find that $u\in L^{\gamma_2}(\Omega)$ for every $p^\ast<\gamma_2<p^\ast/(1-\widehat\beta(\gamma_1))$.
We distinguish three cases:

(B$_1$) 
If $\min\{\widehat\beta(\gamma_2),\beta(\gamma_2)\}>1$, then  
 $u\in L^\infty(\Omega)$ as in Case (A) with $\gamma=\gamma_2$. 
 
 (B$_2$) If $\min\{\widehat\beta(\gamma_2), \beta(\gamma_2)\}=1$, then by choosing $\gamma_2<\widetilde \gamma_2<p^\ast/(1-\widehat \beta(\gamma_1))$, since both $\widehat \beta(\Gamma)$ and $\beta(\Gamma)$ are increasing in $\Gamma$, we obtain that
$
 \min\{\widehat\beta(\widetilde\gamma_2), \beta(\widetilde\gamma_2)\}>1.
$
Hence, $u \in L^\infty (\Omega)$ as in case (A) with $\gamma=\widetilde \gamma_2$.

 (B$_3$)  If $\min\{\widehat\beta(\gamma_2), \beta(\gamma_2)\}<1$, then by
 Lemma~\ref{st} $iii)$, $u\in L^{\gamma_3}(\Omega)$ for every $\gamma_3$ satisfying 
$$\frac{p^\ast}{1-\min\{\widehat \beta(\gamma_1),\beta(\gamma_1)\}}<\gamma_3<\frac{p^\ast}{1- \min\{\widehat \beta(\gamma_2),\beta(\gamma_2)\}}.$$ 
 We now compute $\min\{ \widehat \beta(\gamma_3), \beta(\gamma_3)\}$ and we iterate the previous cases (B$_1$)--(B$_3$). We claim that there exists $n \ge 3$ such that $u \in L^{\gamma_n}(\Omega)$ with 
$\min\{\widehat\beta(\gamma_n),\beta(\gamma_n)\}\geq 1$ and $\min\{\widehat\beta(\gamma_i),\beta(\gamma_i)\}< 1$ for every $2\le i \le n-1$, where 
\begin{equation}\label{ppa}
\frac{p^\ast}{1-\min\{\widehat \beta(\gamma_{i-1}),\beta(\gamma_{i-1})\}}<\gamma_{i+1}<\frac{p^\ast}{1- \min\{\widehat \beta(\gamma_i),\beta(\gamma_i)\}}.
\end{equation}
Indeed, assume by contradiction that $\min\{\widehat\beta(\gamma_j),\beta(\gamma_j)\}< 1$ for all $j\geq 1$. 
We distinguish two cases: If there exists $i\geq 2$ such that $\widehat \beta(\gamma_i)> \beta(\gamma_i)$, then from \eqref{inc}, we have
$\widehat \beta(\gamma_j)> \beta(\gamma_j)$ for every $j\geq i$ and 
we reach a contradiction as in the  proof of Proposition~\ref{prop1} (after \eqref{pap11}). 
If, in turn, $\widehat \beta(\gamma_i)\le\beta(\gamma_i)$ for every $i\geq 1$, then $\widehat \beta(\gamma_i)$ is an increasing sequence converging to a limit $L \in (0,1]$ as $i \to \infty$. Moreover, from \eqref{ppa} we have
$$ \left(
1-\frac{\theta}{p^\ast} +\frac{(\theta-1) \widehat\beta(\gamma_{i-1})}{p^\ast} \right) \frac{p^\ast}{p-1}  < \widehat\beta(\gamma_{i+1})< 
\left(
1-\frac{\theta}{p^\ast} +\frac{(\theta-1) \widehat\beta(\gamma_{i})}{p^\ast} \right)\frac{p^\ast}{p-1} 
$$ 
for every $i\geq 2$. By letting $i\to \infty$, we arrive at
$$ L=\left(1-\frac{\theta}{p^\ast} +\frac{(\theta-1) L}{p^\ast} \right)\frac{p^\ast}{p-1}, 
$$ 
which gives that $L=(p^\ast-\theta)/(p-\theta)>1$ and this is an immediate contradiction with $L\le 1$. 

Finally, using that  $\min\{\widehat\beta(\gamma_n),\beta(\gamma_n)\}\geq 1$, we conclude that $u\in L^\infty(\Omega)$ with the same arguments as in (B$_1$), (B$_2$), respectively, with $\gamma_n$ instead of $\gamma_2$ and $\min\{\widehat \beta(\gamma_{n-1}),\beta(\gamma_{n-1})\}$ instead of $\widehat \beta(\gamma_1)$.
\end{proof}

\begin{rem}\label{rem2}
Let us stress that in case (a$_1$), when $j \in I_1$,  the restriction $\mathfrak{a}_j>0$ is not needed.
\end{rem}

\medskip

We next assume that 
\begin{equation} \label{cen3}
\theta_j<\max\{m,p\}\frac{p_j-q_j}{p_j}+\frac{m q_j}{p_j}, \quad q_j \ge 0, \> \mbox{ for every } 1\le j \le N.
\end{equation}

\begin{prop} \label{prop3}
Let \eqref{IntroEq0}--\eqref{ass} hold and $f\in L^{r}(\Omega)$ with $r>N/p$. 
 Assume Case~1 or Case~2.  If 
$\min_{1\leq j\leq N} \mathfrak{a}_j>0$ and \eqref{cen3} holds, then for every solution 
 $u\in W_0^{1,\overrightarrow{p}}(\Omega) $ of \eqref{eq1} satisfying \eqref{satis}, we have $u\in L^\infty(\Omega)$. Moreover, there exists a positive constant $C$, which depends only on $\Omega, N, m,\overrightarrow{p}, \overrightarrow{q},r, \|f\|_{L^r(\Omega)}, \{ \mathfrak{a}_j\}_{0\leq j\leq N}$ and $\{\theta_j\}_{1\leq j\leq N}$, such that
\begin{equation}\label{uinfty3}  
 \|u\|_{L^\infty(\Omega)}\le C.
\end{equation}
\end{prop}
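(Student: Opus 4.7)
If $m \ge p$, then \eqref{cen3} reduces to $\theta_j < m$ for every $j$, so Proposition~\ref{prop1} already applies. I therefore assume $m < p$. Letting $u$ satisfy \eqref{eq1} and \eqref{satis}, I plan to test \eqref{ff1} with $v = G_k(u)$ to reach \eqref{fifa}. Indices $j$ with either $q_j = 0$ and $\theta_j < p$, or $q_j > 0$ and $\theta_j < m$, I would handle exactly as in Propositions~\ref{prop1} and~\ref{prop2}. The only new situation to control is $q_j > 0$ together with $m \le \theta_j < p(p_j-q_j)/p_j + mq_j/p_j$.

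\textbf{The key new estimate.} For such an index, set $\eta_j := \theta_j - 1 - (m-1)q_j/p_j > 0$ and $\alpha_j := \eta_j p_j/(p_j - q_j)$. A direct algebraic computation shows that \eqref{cen3} is equivalent to $\alpha_j < p - 1$. I then split
\[
|u|^{\theta_j - 1} |\partial_j u|^{q_j} |G_k(u)| = \bigl(|u|^{m-1}|\partial_j u|^{p_j}|G_k(u)|\bigr)^{q_j/p_j} \bigl(|u|^{\alpha_j}|G_k(u)|\bigr)^{(p_j - q_j)/p_j}
\]
and apply Young's inequality with conjugate exponents $p_j/q_j$ and $p_j/(p_j - q_j)$. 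The first factor is absorbed by the $\mathfrak{a}_j$-term on the left of \eqref{fifa} (this is where $\mathfrak{a}_j > 0$ is used). For the second, assuming $u \in L^\gamma(\Omega)$ for some $\gamma \ge p^*$ (initially $\gamma = p^*$), I would apply H\"older's inequality with exponents $\gamma/\alpha_j$, $p^*$, and the complementary exponent, followed by the anisotropic Sobolev inequality \eqref{art1}, and finally Young's inequality with $p, p'$, to arrive at
\[
\int_{A(k)} |u|^{\alpha_j} |G_k(u)|\, dx \le \varepsilon \sum_{\ell=1}^N \int_{A(k)} |\partial_\ell u|^{p_\ell}\, dx + C_\varepsilon \|u\|_{L^\gamma(\Omega)}^{\alpha_j p'}\, \mu_u(k)^{p'(1 - \alpha_j/\gamma - 1/p^*)}.
\]

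\textbf{Stampacchia iteration and expected obstacle.} Combining the above with the estimates from the proof of Proposition~\ref{prop2}, choosing $\varepsilon > 0$ small and $k_0 > 1$ large enough to absorb all $\varepsilon$-contributions, and invoking \eqref{art1} together with \eqref{nnn}, I expect to obtain
\[
\mu_u(h) \le \frac{M}{(h-k)^{p^*}}\, \mu_u(k)^{\min\{\beta(\gamma),\, \widehat\beta(\gamma),\, \check\beta(\gamma)\}},\qquad h > k \ge k_0,
\]
where $\beta, \widehat\beta$ are as in \eqref{pap1} and $\check\beta(\gamma) := (1 - \alpha_*/\gamma - 1/p^*)\, p^*/(p-1)$, with $\alpha_* := \max_j \alpha_j < p - 1$. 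The new exponent $\check\beta$ shares the same functional form as $\widehat\beta$ (with $\alpha_*$ replacing $\theta - 1$), it is increasing in $\gamma$, and it converges to $(p^* - 1)/(p - 1) > 1$ as $\gamma \to \infty$. The main obstacle will be to show that the three-way minimum eventually exceeds $1$ under the bootstrap; this should follow by running the argument of Proposition~\ref{prop2} (cases (A), (B$_1$)--(B$_3$)) verbatim, using monotonicity of each of $\beta, \widehat\beta, \check\beta$ in $\gamma$ and, in the contradiction step, the strict inequalities $r > N/p$ and $\alpha_* < p - 1$ to force the limit to exceed $1$. Once the minimum exceeds $1$, Lemma~\ref{st}$(i)$ yields $u \in L^\infty(\Omega)$, and the uniform bound \eqref{uinfty3} will follow from Young's inequality applied to the quantity $d$ in Lemma~\ref{st}$(i)$, exactly as at the end of the proof of Proposition~\ref{prop1}.
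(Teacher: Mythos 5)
Your proposal is correct and follows essentially the same route as the paper's proof: the same reduction to $m<p$ with the new index class $q_j>0$, $m\le\theta_j<p(p_j-q_j)/p_j+mq_j/p_j$; the same key algebraic identity $\alpha_j=\eta_j p_j/(p_j-q_j)<p-1$ (the paper writes it as $\eta:=\max_j\alpha_j+1<p$); the same exponent $\check\beta(\gamma)$ (their $\widetilde\beta(\gamma)$); and the same Stampacchia bootstrap. The only cosmetic difference is that you apply Young first to peel off $(|u|^{m-1}|\partial_j u|^{p_j}|G_k(u)|)^{q_j/p_j}$ and then Hölder/Sobolev/Young to $\int|u|^{\alpha_j}|G_k(u)|$, whereas the paper performs a single four-factor Hölder followed by Sobolev and a three-term Young; the resulting estimate is identical.
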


\begin{proof} If $m\ge p$ or \eqref{cen} holds, then by Proposition~\ref{prop1}, we have $u\in L^\infty(\Omega)$. On the other hand, if $m<p$ and 
\eqref{cen2} holds, then by Proposition~\ref{prop2}, we get the claim. So, it remains to assume that  $m< p$ and
there exists $1\leq j_1\leq N$ such that 
$$q_{j_1}>0\quad \mbox{and}\quad m\leq \theta_{j_1}<p \left(1-\frac{q_{j_1}}{p_{j_1}}\right) + \frac{mq_{j_1}}{p_{j_1}}.$$  
As in the proof of Proposition \ref{prop1},  
we let $k>1$ and take $v=G_k(u)$ in \eqref{ff1} to recover \eqref{fifa}.

Assume that $u \in L^\gamma(\Omega)$ for some $\gamma \geq p^\ast$. We define $I_1,I_2,I_3$ and $I_4$ as in the proof of Proposition~\ref{prop2}. 
By assumption, we have $I_5\not=\emptyset$, where $I_5$ is given by 
$$ I_5:=\left\{1\leq j\leq N: \ q_j>0,\ m\leq \theta_j<p \left(1-\frac{q_{j}}{p_{j}}\right) + \frac{mq_{j}}{p_{j}}\right\}. 
$$

Fix $\varepsilon>0$ small as needed and $k>k_0$ with $k_0>1$ sufficiently large. 

If $j\in I_1$ or $j\in I_2\cup I_3\cup I_4$, we regain \eqref{car1} or 
\eqref{car2}, respectively. 

If $j\in I_5$, then by H\"older's inequality, we obtain that
\begin{equation} \label{mio} \begin{aligned}
& \int_{A(k)} |u|^{\theta_j-1}\, |\partial_j u|^{q_j}\, |G_k(u)|\, dx \\
& \le    \left(\int_{A(k)}|G_k(u)|^{p^\ast}\, dx \right)^{\frac{p_j-q_j}{p_j p^\ast}}
\left( \int_{A(k)} |u|^{m-1} |\partial_j u|^{p_j} |G_k(u)|\,dx\right)^\frac{q_j}{p_j}
 \\
&\quad \times \left(\int_{A(k)}|u|^\gamma\, dx \right)^{\frac{\eta_j}{\gamma}} \left(\mu_u(k)\right)^{
\frac{p_j-q_j}{p_j}\left(1-\frac{1}{p^\ast}\right)- \frac{\eta_j}{\gamma}},
\end{aligned} \end{equation}
where $\eta_j$ is given by \eqref{etaj}. 

Using \eqref{art1} in the right-hand side of \eqref{mio}, then Young's inequality, we arrive at
\begin{equation} \label{euu} \begin{aligned} 
\int_{A(k)} |u|^{\theta_j-1}\, |\partial_j u|^{q_j}\, |G_k(u)|\, dx \leq & \varepsilon  
 \sum_{\ell=1}^N\int_{A(k)}|\partial_\ell G_k(u)|^{p_\ell}\, dx\\
 &+\varepsilon 
  \int_{A(k)} |u|^{m-1} |\partial_j u|^{p_j} |G_k(u)|\,dx\\
  & + C_\varepsilon \| u\|_{L^\gamma(\Omega)} ^{\frac{p_j\eta_j p'}{p_j-q_j}} 
  \left(\mu_u(k)\right)^{
p'\left(1- \frac{p_j\eta_j}{(p_j-q_j)\gamma}-\frac{1}{p^\ast}\right)}
\end{aligned}
\end{equation}
for some large positive constant $C_\varepsilon$. 
 
From \eqref{fifa}, \eqref{ford0}, \eqref{car1}, \eqref{car2} and \eqref{euu}, choosing $\varepsilon \in (0,1/(2N))$ small and $k_0>1$ large, we get that
$$
\begin{aligned}
\sum_{\ell=1}^N\int_{A(k)}|\partial_\ell u|^{p_\ell}\, dx \le & 2 C_\varepsilon \left(\sum_{j \in I_1} \|u\|_{L^\gamma(\Omega)}^{p'(\theta_j-1)}\left(\mu_u(k)\right)^{p'\left(1-\frac{\theta_j-1}{\gamma}-\frac{1}{p^\ast}\right)}\right)\\
& + 2 C_\varepsilon \left(\sum_{j \in I_5} \|u\|_{L^\gamma(\Omega)}^{\frac{p_j \eta_j  p'}{p_j-q_j}}\left(\mu_u(k)\right)^{p'\left(1-\frac{p_j\eta_j}{(p_j-q_j)\gamma}-\frac{1}{p^\ast}\right)} \right)
\\
&+2\|u\|_{L^\gamma(\Omega)} \|f\|_{L^r(\Omega)} \left(\mu_u(k)\right)^{1-\frac{1}{\gamma}-\frac{1}{r}}
\\
\
\le& M \,\mu_u(k)^{\min\left\{p'\left(1-\frac{\theta-1}{\gamma}-\frac{1}{p^\ast}\right),\,
p'\left(1-\frac{\eta-1}{\gamma}-\frac{1}{p^\ast}\right),\,
1-\frac{1}{\gamma}-\frac{1}{r}\right\}},
\end{aligned}
$$
where 
$$\theta:=\max_{j \in I_1}\theta_j, \quad \eta:= \max_{j\in I_5}\left\{ \theta_j+\frac{q_j (\theta_j-m)}{p_j-q_j}\right\}=\max_{j\in I_5} \frac{p_j \eta_j}{p_j-q_j}+1>1,
$$
$C_\Omega$ is a positive constant depending on $|\Omega|$, $p$, $N$, and $r$, and
\begin{equation}\label{emme}
M:=C_\Omega \,\left(C_\varepsilon \left(\sum_{j \in I_1} \|u\|_{L^\gamma(\Omega)}^{p'(\theta_j-1)}\right)
+ C_\varepsilon \left(\sum_{j \in I_5} \|u\|_{L^\gamma(\Omega)}^{\frac{p_j \eta_j  p'}{p_j-q_j}}\right)
+\|u\|_{L^\gamma(\Omega)} \|f\|_{L^r(\Omega)} \right).
\end{equation}
Taking $h>k\ge k_0$ and using \eqref{art1} and \eqref{nnn}, we obtain that
\begin{equation} \label{neop}
\mu_u(h) \le \frac{\mathcal{S}^{p^\ast} M^{\frac{p^\ast}{p}}}{(h-k)^{p^\ast}}  \,\mu_u(k)^{\min \left\{ \widehat\beta(\gamma),\widetilde \beta(\gamma), \beta(\gamma) \right\}},
\end{equation} 
where we define $\widehat\beta(\gamma)$ and $ \beta(\gamma)$ as in \eqref{pap1}, and 
\begin{equation}\label{uau}
\widetilde \beta(\gamma):= \left(1-\frac{\eta-1}{\gamma}-\frac{1}{p^\ast}\right) \frac{p^\ast}{p-1} . 
\end{equation}

If $\theta\geq \eta$, then $\min\{\widetilde \beta(\gamma),\widehat \beta(\gamma)\}=\widehat \beta(\gamma)$. Then, as in the proof of Proposition~\ref{prop2}, we get $u\in L^\infty(\Omega)$. If $\theta<\eta$, then $\min\{\widetilde \beta(\gamma),\widehat \beta(\gamma)\}=\widetilde \beta(\gamma)$ 
and, reasoning as in the proof of Proposition~\ref{prop2} with $\eta$ instead of $\theta$, we conclude that $u\in L^\infty(\Omega)$. Indeed, in order to get a contradiction we need that $\eta<p$, which is true according to the definition of $I_5$. This finishes the proof of Proposition~\ref{prop3}. 
\end{proof}

\section{Proof of Theorem \ref{pre} concluded}

Assume that $u \in L^\gamma(\Omega)$ for some $\gamma \geq p^\ast$. In view of Proposition \ref{prop3}, Remarks \ref{rem1} and \ref{rem2}, in order to conclude the proof of Theorem \ref{pre}, we have to consider the remaining  possibility that $I_0\neq \emptyset$, where
$$
I_0:=\left\{1 \le j \le N: \mathfrak{a}_j=0,\,  q_j>0, \,  0<\theta_j< \max\{m,p\}\frac{(p_j-q_j)}{p_j}\right\}.
$$
Let $j \in I_0$ be arbitrary. Fix $\varepsilon>0$ small as needed and $k>k_0$ with $k_0>1$ sufficiently large.

($\bullet$) We first assume that $m \ge p$, which means that 
\begin{equation}\label{qq}
\frac{\theta_j p_j}{p_j-q_j}<m.
\end{equation}

 By H\"older's and Young's inequalities, there exists $C_\varepsilon>0$ such that
$$
\begin{aligned}
\int_{A(k)} |u|^{\theta_j-1}|\partial_j u|^{q_j} |G_k(u)|\, dx& \le \left(\int_{A(k)}|\partial_j u|^{p_j}\, dx\right)^{\frac{q_j}{p_j}}\left(\int_{A(k)}|u|^{\frac{(\theta_j-1) p_j}{p_j-q_j}}|G_k(u)|^{\frac{p_j}{p_j-q_j}}\, dx \right)^{\frac{p_j-q_j}{p_j}}
\\
& \le \varepsilon \int_{A(k)}|\partial_j u|^{p_j}\, dx+ C_\varepsilon \int_{A(k)}|u|^{\frac{\theta_j p_j}{p_j-q_j}-1}|G_k(u)|\, dx 
\\
& \le  \varepsilon \int_{A(k)}|\partial_j u|^{p_j}\, dx+ C_\varepsilon k^{\frac{\theta_j p_j}{p_j-q_j}-m}\int_{A(k)}|u|^{m-1}|G_k(u)|\, dx.
\end{aligned}
$$
Finally, since \eqref{qq} holds, the arguments in Proposition \ref{prop3} apply.

\medskip
($\bullet\bullet$) Let $m<p$. If $\theta_j<m(p_j-q_j)/p_j$, we conclude the proof with the same arguments as above. It remains to consider the case $j \in J_0$, where
$$
J_0:=\left\{1 \le j \le N: \mathfrak{a}_j=0,\,  q_j>0, \,\frac{m(p_j-q_j)}{p_j}\le \theta_j < \frac{p(p_j-q_j)}{p_j}\right\}.
$$
 By H\"older's and Young's inequalities, there exists $C_\varepsilon>0$ such that
$$
\begin{aligned}
\int_{A(k)} |u|^{\theta_j-1}|\partial_j u|^{q_j} |G_k(u)|\, dx& \le \left(\int_{A(k)}|\partial_j u|^{p_j}\, dx\right)^{\frac{q_j}{p_j}}\left(\int_{A(k)}|u|^{\frac{\theta_j p_j}{p_j-q_j}}\, dx \right)^{\frac{p_j-q_j}{p_j}}
\\
& \le \varepsilon \int_{A(k)}|\partial_j u|^{p_j}\, dx+ C_\varepsilon \int_{A(k)}|u|^{\frac{\theta_j p_j}{p_j-q_j}}\, dx 
\\
& \le  \varepsilon \int_{A(k)}|\partial_j u|^{p_j}\, dx+ C_\varepsilon k^{\frac{\theta_j p_j}{p_j-q_j}-p}\int_{A(k)}|u|^p\, dx
\\
& \le  \varepsilon \int_{A(k)}|\partial_j u|^{p_j}\, dx+ C_\varepsilon k^{\frac{\theta_j p_j}{p_j-q_j}-p} \left(\int_{A(k)}|u|^\gamma \, dx\right)^{\frac p \gamma}\left(\mu_u(k)\right)^{1-\frac p \gamma}.
\end{aligned}
$$
Let $\widehat\beta(\gamma)$ and $ \beta(\gamma)$ as in \eqref{pap1}, $\widetilde \beta(\gamma)$ as in \eqref{uau}, and $M$ as in \eqref{emme}, respectively. Taking $h>k\ge k_0$ and similar to the proof of \eqref{neop}, we obtain that
\begin{equation} \label{neop1}
\mu_u(h) \le \frac{\mathcal{S}^{p^\ast} \overline M^{\frac{p^\ast}{p}}}{(h-k)^{p^\ast}}  \,\mu_u(k)^{\min \left\{ \widehat\beta(\gamma),\widetilde \beta(\gamma), \beta(\gamma),\overline \beta(\gamma) \right\}},
\end{equation} 
where 
$$
\overline \beta(\gamma)=\left(\frac 1 p -\frac 1 \gamma \right)p^\ast,
$$
and
\begin{equation}\label{M}
\overline M = M+C_\Omega \left(C_\varepsilon k^{\max_{j \in J_0} \left\{\frac{\theta_jp_j}{p_j-q_j}\right\}-p}\|u\|_{L^\gamma(\Omega)}^p\right).
\end{equation}
It is easy to check that 
\begin{equation}\label{neop2}
\widehat \beta(\gamma)-\overline \beta(\gamma)>0, \quad \widetilde \beta(\gamma)-\overline \beta(\gamma)>0.
\end{equation}
Moreover, $\overline \beta(\gamma)-\beta(\gamma)$ is increasing in $\gamma$. Hence, we can apply an  iteration scheme analogous to the one  in Case $(E_2)$ of Proposition \ref{prop1}. Note that, for every $\gamma>p$, if $\overline \beta(\gamma)<1$, then $\gamma<p^\ast/(1-\overline \beta(\gamma))$. On the other hand, once the iteration procedure starts, it has to end in a finite number of steps. Indeed, if this were not the case, we would be able to construct two increasing sequences $\{\gamma_i\}_{i\ge 1}$ in $ (p,+\infty)$ and $\{\overline \beta(\gamma_i)\}_{i \ge 1}$ in $ (0,1)$, with $\overline \beta(\gamma_i) \to L \in (0,1]$ satisfying 
$$
L=\frac{p^\ast}{p}-1+L,
$$
which is a contradiction. 
As a consequence we get that 
\begin{equation*}\label{b1}
\mbox{there exists }\gamma=\gamma(N,p,r) \ge p^\ast \mbox{  such that }
\min\{\overline\beta(\gamma),\beta(\gamma)\}>1.
\end{equation*} 
Hence, from Lemma \ref{st} $i)$, we deduce that $u \in L^\infty(\Omega)$.
Moreover, using \eqref{M} and that $\max_{j \in J_0} \left\{\frac{\theta_jp_j}{p_j-q_j}\right\}-p<0$, we get \eqref{uinfty}.
This concludes the proof.


\medskip

\subsection*{Acknowledgments} The work on this project was finalised in June 2023 during the second author's visit to the Dipartimento di Matematica e Informatica, Universit\`a degli Studi di Palermo. The second author is very grateful for the support and hospitality of her co-author while carrying out research at her institution.

\medskip
\noindent {\bf Funding.} The first author has been supported by  the grant ``FFR 2023 Barbara Brandolini'', Universit\`a degli Studi di Palermo. The research of the second author has been supported by Australian Research Council under the Discovery Project Scheme (DP190102948 and DP220101816).

 
\medskip

\end{document}